\documentclass[11pt]{amsart}
\usepackage[all]{xy}
\usepackage{amsfonts,amsmath,amssymb,amscd}
\setcounter{MaxMatrixCols}{12}
\usepackage{color}
\usepackage[pagebackref,colorlinks, linkcolor=blue, citecolor=red]{hyperref}
\usepackage{caption} 
\usepackage{hyperref}
\usepackage{stmaryrd}
\usepackage{mathtools}

\usepackage[top=1in, bottom=1in, left=.94in, right=.94in]{geometry}
\usepackage{graphicx}

\numberwithin{equation}{section}
\newtheorem{theorem}{Theorem}[section]
\newtheorem{proposition}[theorem]{Proposition}

\newtheorem{coro}[theorem]{Corollary}
\newtheorem{definition}[theorem]{Definition}

\newtheorem{remark}[theorem]{Remark}
\newtheorem{example}[theorem]{Example}
\newtheorem{question}[theorem]{Question}

\title[Reduced Type of certain Numerical Semigroup Rings]{Reduced Type of certain Numerical Semigroup Rings}

\author{Om Prakash Bhardwaj}
\address{IIT Bombay, Powai, Maharashtra-400076, India}
\email{om.prakash@math.iitb.ac.in}
\thanks{}


\subjclass{13A15, 13D02, 20M25}   
\keywords{type, reduced type, numerical semigroup rings, maximal reduced type, minimal reduced type}

\begin{document}
\begin{abstract}
For a reduced one-dimensional complete local $k$-algebra $R$, Huneke et al. (Res. Math. Sci., 8(4), paper no. 60, 2021) introduced an important invariant, the reduced type. In this article, we study the extremal behavior of reduced type of some special numerical semigroup rings. For a numerical semigroup ring, the behavior of reduced type can be studied by analyzing the set of pseudo-Frobenius elements of the numerical semigroup.  We give complete descriptions of pseudo-Frobenius elements of Bresinsky's numerical semigroups and duplication of numerical semigroups. Further, we explore the extremal behavior of reduced type for the associated semigroup rings.
\end{abstract}

\maketitle

\section{Introduction}
Let $k$ be an algebraically closed field of characteristic zero and $(R,\mathbf{m},k)$ be an equicharacteristic reduced one-dimensional complete local $k$-algebra. Let $x$ be a minimal reduction in the maximal ideal $\mathbf{m}$ and $\mathfrak{C} = (R :_{K} \bar{R})$ be the conductor ideal of $R$, where $K$ is the quotient field of $R$. To explore the Berger's conjecture \cite{berger} (which states that the universally finite module of differentials, $\Omega_R$, is torsion-free if and only if $R$ is regular), Huneke et al. in \cite{huneke} introduced an important invariant, reduced type $s(R)$ defined as $s(R):= \mathrm{dim}_{k}\frac{\mathfrak{C}+xR}{xR}$. Also, this invariant has been used by the authors in \cite{mukundan2} to study the non-zero torsions in the module of differentials, $\Omega_R$. The invariant $s(R)$ is related to the invariant Cohen-Macaulay type of $R$, denoted by $\mathrm{type}(R)$. Since $\frac{\mathfrak{C}+xR}{xR} \subseteq \frac{xR : \mathbf{m}}{xR}$, and the dimension of $\frac{xR : \mathbf{m}}{xR}$ as a $k$-vector space is precisely the Cohen-Macaulay type of $R$, one has $ 1 \leq s(R) \leq \mathrm{type}(R).$ Following this observation, the authors in \cite{mukundan} defined that $R$ has minimal reduced type if $s(R)=1$, and $R$ has maximal reduced type if $s(R) = \mathrm{type}(R).$ Thus,  $R$ has both maximal and minimal reduced type if and only if $R$ is Gorenstein. For non-Gorenstein rings, it is natural to ask the following question:
\begin{question}[{\cite[Question 2.5]{mukundan}}]\label{question}
 Classify, when $R$ has maximal or minimal reduced type?
\end{question} 

Let $S$ be a numerical semigroup, i.e., a finitely complemented submonoid of non-negative integers $\mathbb{N}$. Let $n_1< n_2 < \ldots < n_e$ be the minimal generators of $S$. Define a map $\phi: k \llbracket X_1, \ldots, X_e \rrbracket \rightarrow k \llbracket t \rrbracket$ such that $X_i = t^{n_i}$ for $1 \leq i \leq e$. Then the one-dimensional local $k$-algebra $R:= k\llbracket t^{n_1}, \ldots, t^{n_e} \rrbracket \cong \frac{k\llbracket X_1, \ldots, X_e \rrbracket}{\mathrm{ker(\phi)}}$ is called the numerical semigroup ring associated to $S$, denoted by $k\llbracket S \rrbracket$. In this article, $R$ will be a semigroup ring associated with a numerical semigroup $S$. There are nice relations between the properties of the $k$-algebra $R:= k\llbracket S \rrbracket$ and the numerical semigroup $S$. Kunz \cite{kunz}, proved that $R$ is Gorenstein if and only if its value semigroup $S$ is symmetric (i.e., $s \notin S$ implies $\mathrm{F}(S)-x \in S$), where $\mathrm{F}(S)$ is the Frobenius number of the numerical semigroup $S$. Barucci and Fr\"{o}berg \cite{barucci-froberg}, showed that $R$ is almost Gorenstein if and only if the numerical semigroup $S$ is almost symmetric. Also, it is well known that the Cohen-Macaulay type of $R$ is equal to the cardinality of the set of pseudo-Frobenius numbers of the numerical semigroup $S$. In \cite[Theorem 2.13]{mukundan}, the authors gave a characterization of the reduced type of a numerical semigroup ring. They prove that $s( k\llbracket  S \rrbracket)$ is equal to the cardinality of the set $[ \mathrm{F}(S)-m(S)+1,\mathrm{F}(S)] \setminus S$, where $m(S)$ is the smallest element of $S$, called the multiplicity of $S$. Using this characterization, they study Question \ref{question} for the almost Gorenstein numerical semigroup rings, numerical semigroup rings of minimal multiplicity, far-flung Gorenstein numerical semigroup rings, and the numerical semigroup rings associated with the gluing of numerical semigroups. In this article, we study Question \ref{question} for other important classes of numerical semigroup rings.

Now, we summarize the contents of the paper. Section 2 recalls some definitions and results about numerical semigroups and reduced type. Let $p\geq 2$, $n_0,d,s \geq 1$, and $S = \langle n_0, sn_0+d , \ldots, sn_0+pd \rangle$ be a numerical semigroup minimally generated by the generalized arithmetic sequence $n_0, sn_0+d , \ldots, sn_0+pd.$ In section 3, we consider three well studied classes of numerical semigroups: (i) numerical semigroups generated by generalized arithmetic sequences, (ii) numerical semigroups generated by the integers defined in Backelin's curves, and (iii) numerical semigroups generated by the integers defined in Bresinsky's curves.  The description of pseudo-Frobenius numbers of numerical semigroups generated by generalized arithmetic sequences given in \cite{matthews}, using this we study the reduced type of these numerical semigroup rings.
Next, we study the extremal behavior of reduced type of the numerical semigroup rings defined by the integers of the parametrizations of Bresinsky's and Backelin's monomial curves. In \cite{bresinsky1}, Bresinsky gave an example of monomial curves in the affine $4$-space $\mathbb{A}^4$ defined by the integers $n_1 = 2h(2h+1)$, $n_2 = (2h-1)(2h+1)$, $n_3 = 2h(2h+1)+(2h-1)$, and $n_4 = (2h-1)2h$, where $h \geq 2$. In \cite{froberg}, Fr\"{o}berg et al. communicated a family of semigroups that was introduced by Backelin, which is defined by the integers  $n_1 = r(3n+2)+3$, $n_2 = r(3n+2)+6$, $n_3 = r(3n+2)+3n+4$, $r(3n+2)+3n+5$, where $n \geq 2$, and $r\geq 3n+2$. A description of pseudo-Frobenius numbers of numerical semigroups generated by the integers defined in Backelin's curves is given in \cite[Proposition 1.8]{enescu}. In Theorem \ref{pfbresinsky}, we give a complete description of the pseudo-Frobenius numbers of numerical semigroups generated by the integers defined in Bresinsky's curves. Using these descriptions of pseudo-Frobenius numbers, we prove:
Let $S = \langle n_1,n_2,n_3,n_4 \rangle$ be a numerical semigroup generated by $n_1,n_2,n_3,n_4$ as defined in Bresinsky's curves, and $S' = \langle n_1,n_2,n_3,n_4 \rangle$ be a numerical semigroup generated by the integers $n_1,n_2,n_3,n_4$  defined in Backelin's curves. Then $k\llbracket  S \rrbracket$ and $k\llbracket  S' \rrbracket$ never have maximal or minimal reduced type.

In section 4, we study the extremal behavior of the reduced type of semigroup rings associated with a special kind of gluing of numerical semigroups. Delorme \cite{delorme}  gave the first construction of gluing to characterize the complete intersection numerical semigroup rings. Recall the definition of gluing of numerical semigroups by Rosales from \cite{rosales97}. Let $S_1$ and $S_2$ be two numerical semigroups minimally generated by $n_1,\ldots,n_r$ and $n_{r+1},\ldots,n_e$ respectively. Let $\mu \in S_1 \setminus \{ n_1,\ldots,n_r \}$ and $\lambda \in S_2 \setminus \{ n_{r+1}, \ldots, n_e \}$ be such that $\gcd(\lambda,\mu)=1.$ We say that $S = \langle \lambda n_1, \ldots, \lambda n_r, \mu n_{r+1}, \ldots, \mu n_{e} \rangle$ is a gluing of $S_1$ and $S_2,$ denoted by $S_1+_{\lambda \mu} S_2$. In \cite[Theorem 3.34]{mukundan}, the authors prove that if a numerical semigroup $S$ is a gluing of numerical semigroups $S_1$ and $S_2$ then $s(k\llbracket S \rrbracket) \leq s(k\llbracket S_1 \rrbracket) s(k\llbracket S_1 \rrbracket)$, and consequently prove that if $k\llbracket S_1 \rrbracket$ and $k\llbracket S_2 \rrbracket$ have minimal reduced type so has $k\llbracket S \rrbracket$. However, such a nice analog for maximal reduced type does not hold. In Theorem \ref{gluingmaxredtype}, we give a sufficient condition for this analog for a special kind of gluing of numerical semigroups, and as a consequence, we obtain, in Corollary \ref{maxniceext}, that a numerical semigroup ring associated with a nice extension of a numerical semigroup $S$ is of maximal reduced type if and only if $k \llbracket S \rrbracket$ has maximal reduced type.

In section 5, we study the extremal behavior of the reduced type of the semigroup ring associated with the duplication of a numerical semigroup. The motivation of the duplication comes from the idealization of the a module over a commutative ring with unity, introduced by Nagata (see \cite[page 2]{nagata}). Let $S$ be a numerical semigroup and $E \subseteq S$ be an ideal of $S$, i.e., $S + E \subseteq E$. For an odd integer $d \in S$ and an ideal $E \subseteq S$, the authors in   \cite{d'anna}, defined the numerical duplication of $S$ with respect to $E$ and $d$ as $$ S \bowtie^d E = 2 \cdot S \cup (2\cdot E + d),$$  
\noindent
where $2 \cdot S = \{2s \mid s \in S\}$ and $2 \cdot E + d = \{2e + d \mid e \in E\}$. In Theorem \ref{pfduplication}, we give a complete description of the set of pseudo-Frobenius elements of the duplication of a numerical semigroup. Using this description, in the remaining section (Theorem \ref{minreddup}, Proposition \ref{maxreddup}, Proposition \ref{maxreddup2}),  we explore when the duplication $k \llbracket S \bowtie^d E \rrbracket$ has minimum/maximum reduced type. Thus, we observe the following three important points with the help of the construction of the duplication:
\begin{enumerate}
\item[(i)] For a fixed $r > 0$, there are infinitely many numerical semigroups with type $r$.
\item[(ii)] For a fixed $r > 0$, there are infinitely many numerical semigroups of minimal reduced type with type $r$. 
\item[(iii)] For a fixed $r > 0$, there are infinitely many numerical semigroups of maximal reduced type with type $r$.
\end{enumerate}

\section{Preliminaries}
Let $\mathbb{Z}$ and $\mathbb{N}$ denote the sets of integers and non-negative integers, respectively. For the basics about numerical semigroups, one can refer to \cite{numerical}. Now, we recall some useful definitions and results.

\begin{definition}
Let $S$ be a submonoid of $\mathbb{N}$ such that $\mathbb{N} \setminus S$ is finite, then $S$ is called a numerical semigroup. Equivalently, there exist $n_1,\ldots,n_e \in \mathbb{N}$ such that $\mathrm{gcd}(n_1,\ldots,n_e) = 1$ and 
\[
S =  \langle n_1, \ldots, n_e \rangle = \left\lbrace \sum_{i=1}^{e} \lambda_i n_i \mid \lambda_i \in \mathbb{N}, \forall i \right\rbrace.
\]
\end{definition}

It is known that every numerical semigroup has a unique minimal generating set; the cardinality of this set is known as the embedding dimension of $S$. The smallest non-zero element of $S$ is called the multiplicity of $S$, denoted by $m(S)$, and the largest element in $\mathbb{N}\setminus S$ is called the Frobenius number of $S$, and it is denoted by $\mathrm{F}(S)$. 

\begin{definition}
Let $S$ be a numerical semigroup. An element $f \in \mathbb{Z} \setminus S$ is called a pseudo-Frobenius number if $f + s \in S$ for all $s \in S\setminus \{0\}$. The set of pseudo-Frobenius numbers of $S$ is denoted by $\mathrm{PF}(S)$. Note that $\mathrm{F}(S) \in \mathrm{PF}(S)$ and $\mathrm{F}(S)$ is the maximum element of $\mathrm{PF}(S)$. We denote the set $\mathrm{PF}(S) \setminus \{\mathrm{F}(S)\}$ of pseudo-Frobenius number other than Frobenius number by $\mathrm{PF}'(S)$.
\end{definition}

Let $S$ be a numerical semigroup minimally generated by $n_1, \ldots, n_e$. The local $k$-algebra $k\llbracket S \rrbracket := k\llbracket t^{n_1}, \ldots, t^{n_e} \rrbracket$ is called the numerical semigroup ring associated to the numerical semigroup  $S$. The ring $ k\llbracket t^{n_1}, \ldots, t^{n_e} \rrbracket \cong R = \frac{k\llbracket X_1, \ldots, X_e \rrbracket}{I}$ for some $I \subseteq (X_1, \ldots, X_e)^2.$ 

\begin{definition}
The cardinality of the set of pseudo-Frobenius numbers is known as the type of the numerical semigroup $S$, which is equal to the Cohen-Macaulay type of the numerical semigroup ring $k\llbracket S \rrbracket$, denoted by $\mathrm{type}(k\llbracket S \rrbracket)$.
\end{definition} 

\begin{definition}[{\cite{huneke}}]
Let $S$ be a numerical semigroup minimally generated by $n_1, < n_2< \ldots < n_e$. Let $R = k\llbracket  S \rrbracket$ be the numerical semigroup ring associated with $S$. Let $x_i$ be the images of $X_i$ modulo $I$, the reduced type of $R$, denoted by $s(R)$ is defined as
$$
s(R) = \mathrm{dim}_{k}\frac{\mathfrak{C}+x_1R}{x_1R},
$$
where $\mathfrak{C} = (R :_{K} \bar{R})$ is the conductor ideal of $R$, and $K$ is the quotient field of $R$. The reduced type $s(R)$ satisfies the inequality $ 1 \leq s(R) \leq \mathrm{type}(R).$ 
\end{definition}

 Following this observation, the authors in \cite{mukundan} defined the notion of maximal and minimal reduced type as follows:

\begin{definition}
Let $R = k\llbracket  S \rrbracket$ be a numerical semigroup ring. Then $R$ has maximal reduced type if $s(S) = \mathrm{type}(S)$. Analogously, $R$ has minimal reduced type if $s(S) = 1$.
\end{definition}

In \cite[Theorem 2.13]{mukundan}, the authors proved that the reduced type $s(R)$ of a numerical semigroup ring $R = k\llbracket  S \rrbracket$ is equal to the cardinality of the set $[ \mathrm{F}(S)-m(S)+1,\mathrm{F}(S)] \setminus S$. Note that $[ \mathrm{F}(S)-m(S)+1,\mathrm{F}(S)] \setminus S \subseteq \mathrm{PF}(S)$. Using this, one can deduce the following criterion for a numerical semigroup ring for having maximal/minimal reduced type as:  

\begin{proposition}\label{maxred}
Let $S$ be a numerical semigroup. The semigroup ring $k\llbracket  S \rrbracket$ has maximal reduced type if and only if $\mathrm{min}~\mathrm{PF}(S) \geq \mathrm{F}(S)-m(S)+1$. 
\end{proposition}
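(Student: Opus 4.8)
The plan is to reduce the statement to a purely combinatorial comparison of two finite sets of gaps, using the three facts already recorded in the excerpt: the characterization $s(R)=\bigl|\,[\mathrm{F}(S)-m(S)+1,\mathrm{F}(S)]\setminus S\,\bigr|$ from \cite[Theorem 2.13]{mukundan}, the inclusion $[\mathrm{F}(S)-m(S)+1,\mathrm{F}(S)]\setminus S\subseteq \mathrm{PF}(S)$ noted just before the statement, and the identity $\mathrm{type}(R)=|\mathrm{PF}(S)|$. To keep the argument transparent I would write $A:=[\mathrm{F}(S)-m(S)+1,\mathrm{F}(S)]\setminus S$, so that $s(R)=|A|$, $\mathrm{type}(R)=|\mathrm{PF}(S)|$, and $A\subseteq \mathrm{PF}(S)$.

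First I would observe that, by definition, $R$ has maximal reduced type exactly when $s(R)=\mathrm{type}(R)$, i.e. when $|A|=|\mathrm{PF}(S)|$. Since $A$ and $\mathrm{PF}(S)$ are both finite and $A\subseteq \mathrm{PF}(S)$, equality of cardinalities is equivalent to equality of sets. Thus maximal reduced type is equivalent to the set equality $A=\mathrm{PF}(S)$, and the whole problem becomes: decide precisely when every pseudo-Frobenius number lies in the window $[\mathrm{F}(S)-m(S)+1,\mathrm{F}(S)]$.

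Next I would translate this set equality into the asserted inequality. For the implication from the inequality to maximal reduced type, assume $\min \mathrm{PF}(S)\geq \mathrm{F}(S)-m(S)+1$. Every $f\in \mathrm{PF}(S)$ is a gap, hence $f\notin S$, and satisfies $f\leq \mathrm{F}(S)$ because $\mathrm{F}(S)=\max \mathrm{PF}(S)$; together with the hypothesis $f\geq \mathrm{F}(S)-m(S)+1$ this forces $f\in A$, so $\mathrm{PF}(S)\subseteq A$, and with the reverse inclusion we get $A=\mathrm{PF}(S)$. Conversely, if $R$ has maximal reduced type then $A=\mathrm{PF}(S)$, and since every element of $A$ is by construction at least $\mathrm{F}(S)-m(S)+1$, we conclude $\min \mathrm{PF}(S)=\min A\geq \mathrm{F}(S)-m(S)+1$. (Here both sets are nonempty because $\mathrm{F}(S)$ lies in each, so $\min$ is meaningful.)

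There is no genuinely hard analytic step here; the argument is a clean inclusion-plus-cardinality maneuver. The only points demanding care are bookkeeping ones: upgrading the equality of cardinalities to equality of sets requires finiteness of $\mathrm{PF}(S)$ (true for any numerical semigroup) and the already-noted inclusion $A\subseteq\mathrm{PF}(S)$, while the reduction to a single lower-bound inequality relies on the fact that every pseudo-Frobenius number is automatically $\leq\mathrm{F}(S)$ and is a gap, so the interval $[\mathrm{F}(S)-m(S)+1,\mathrm{F}(S)]$ imposes no upper constraint beyond what $\mathrm{PF}(S)$ already satisfies. The main obstacle, such as it is, is simply confirming that these standing facts justify each equivalence; once they are in hand, the proposition follows immediately.
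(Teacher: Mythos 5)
Your argument is correct and is precisely the deduction the paper leaves implicit (the paper states Proposition \ref{maxred} as an immediate consequence of $s(R)=\bigl|[\mathrm{F}(S)-m(S)+1,\mathrm{F}(S)]\setminus S\bigr|$, the inclusion of that set in $\mathrm{PF}(S)$, and $\mathrm{type}(R)=|\mathrm{PF}(S)|$, without writing out the details). The inclusion-plus-cardinality step and the observation that every pseudo-Frobenius number is automatically a gap bounded above by $\mathrm{F}(S)$ are exactly the right bookkeeping, so nothing is missing.
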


\begin{proposition}\label{minred}
Let $S$ be a numerical semigroup. The semigroup ring $k\llbracket  S \rrbracket$ has minimal reduced type if and only if $\mathrm{max}~\mathrm{PF}'(S) < \mathrm{F}(S)-m(S)+1$.
\end{proposition}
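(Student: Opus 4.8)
The plan is to reduce everything to the combinatorial formula $s(k\llbracket S \rrbracket) = |[\mathrm{F}(S)-m(S)+1,\mathrm{F}(S)] \setminus S|$ of \cite[Theorem 2.13]{mukundan}, together with the inclusion $[\mathrm{F}(S)-m(S)+1,\mathrm{F}(S)] \setminus S \subseteq \mathrm{PF}(S)$ recorded in the excerpt. By definition $k\llbracket S \rrbracket$ has minimal reduced type precisely when this set is a singleton. Since $m(S) \geq 1$, we always have $\mathrm{F}(S) \in [\mathrm{F}(S)-m(S)+1,\mathrm{F}(S)]$ and $\mathrm{F}(S) \notin S$, so $\mathrm{F}(S)$ is always a member; hence minimal reduced type is equivalent to $\mathrm{F}(S)$ being the \emph{only} gap of $S$ lying in $[\mathrm{F}(S)-m(S)+1,\mathrm{F}(S)]$.

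The crux is to identify the other gaps in this interval with pseudo-Frobenius numbers. First I would establish the set equality
\[
\bigl([\mathrm{F}(S)-m(S)+1,\mathrm{F}(S)] \setminus S\bigr)\setminus\{\mathrm{F}(S)\} = \{\, f \in \mathrm{PF}'(S) \mid f \geq \mathrm{F}(S)-m(S)+1 \,\}.
\]
The inclusion $\subseteq$ is immediate from $[\mathrm{F}(S)-m(S)+1,\mathrm{F}(S)] \setminus S \subseteq \mathrm{PF}(S)$: any gap of $S$ in the interval other than $\mathrm{F}(S)$ is a pseudo-Frobenius number distinct from $\mathrm{F}(S)$, hence lies in $\mathrm{PF}'(S)$ and is $\geq \mathrm{F}(S)-m(S)+1$. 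For $\supseteq$, every element of $\mathrm{PF}'(S)$ lies outside $S$ and is strictly below $\mathrm{F}(S)$, so an $f \in \mathrm{PF}'(S)$ with $f \geq \mathrm{F}(S)-m(S)+1$ automatically sits in $[\mathrm{F}(S)-m(S)+1,\mathrm{F}(S)-1]\setminus S$.

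With this equality the proposition becomes a one-line translation: minimal reduced type means the left-hand set is empty, which holds exactly when no element of $\mathrm{PF}'(S)$ satisfies $f \geq \mathrm{F}(S)-m(S)+1$, i.e. when $\mathrm{max}~\mathrm{PF}'(S) < \mathrm{F}(S)-m(S)+1$. Equivalently, one can argue the two implications directly: for ``if'', a gap $f \neq \mathrm{F}(S)$ in the interval lies in $\mathrm{PF}'(S)$, forcing $f \leq \mathrm{max}~\mathrm{PF}'(S) < \mathrm{F}(S)-m(S)+1$, against $f \geq \mathrm{F}(S)-m(S)+1$; for ``only if'', if $\mathrm{max}~\mathrm{PF}'(S) \geq \mathrm{F}(S)-m(S)+1$ then this maximizer is a gap in the interval different from $\mathrm{F}(S)$, making the set have at least two elements. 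I expect no serious obstacle; the only point requiring a little care is the degenerate case $\mathrm{PF}'(S) = \emptyset$ (the symmetric, i.e. Gorenstein, case), where the inequality is read with the convention $\mathrm{max}~\emptyset = -\infty$ and is vacuously true, consistent with Gorenstein rings having minimal reduced type.
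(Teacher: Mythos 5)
Your proposal is correct and follows exactly the derivation the paper intends: the paper states Proposition \ref{minred} as an immediate consequence of the formula $s(k\llbracket S\rrbracket)=|[\mathrm{F}(S)-m(S)+1,\mathrm{F}(S)]\setminus S|$ together with the inclusion of that set in $\mathrm{PF}(S)$, which is precisely the reduction you carry out. Your explicit identification of the non-$\mathrm{F}(S)$ gaps in the interval with the elements of $\mathrm{PF}'(S)$ lying in it, and your remark on the Gorenstein case $\mathrm{PF}'(S)=\emptyset$, simply fill in the details the paper leaves to the reader.
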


We will use the above results for a semigroup ring having maximal and minimal reduced type and use them throughout the article without repeated reference.

\section{Some interesting classes of numerical semigroups}

\subsection{Numerical semigroups generated by generalized arithmetic sequences:} Numerical semigroups with embedding dimension two always have type one. Thus, the semigroup rings associated with them always have both maximum and minimum reduced type. In this section, we will consider the numerical semigroups with embedding dimension greater than or equal to three and minimally generated by generalized arithmetic sequences. Let $p\geq 2$, $n_0,d,s \geq 1$, and $S = \langle n_0, sn_0+d , \ldots, sn_0+pd \rangle$ be a numerical semigroup minimally generated by the generalized arithmetic sequence $n_0, sn_0+d , \ldots, sn_0+pd.$  For $s = 1$, a generalized arithmetic sequence is an arithmetic sequence. These numerical semigroups have been studied in \cite{matthews}.

\begin{theorem}\label{pfgas}
Let $S = \langle n_0, n_1, \ldots, n_p \rangle$ be a numerical semigroup minimally generated by a generalized arithmetic sequence, where $n_i = sa + id$ for $1\leq i \leq p$. Write $n_0 = ap + b, 0 \leq b < p$, then we have the following:
\begin{enumerate}
\item[(i)] If $b = 0$, then
$\mathrm{PF}(S) = \{an_p -n_0 - (p-i)d \mid 1 \leq i \leq p-1\}.$
\item[(ii)] If $b = 1$, then
 $\mathrm{PF}(S) = \{an_p - n_0 - (p-i)d \mid 1 \leq i \leq p\}.$
 \item[(iii)] If $b \neq 0,1$, then
 $\mathrm{PF}(S) = \{an_p + id \mid 1 \leq i \leq b-1\}.$
\end{enumerate}
\end{theorem}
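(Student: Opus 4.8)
The plan is to read off $\mathrm{PF}(S)$ from the Apéry set of $S$ with respect to its multiplicity $n_0$, via the standard identification $\mathrm{PF}(S)=\{\,w-n_0 : w \text{ is } \leq_S\text{-maximal in } \mathrm{Ap}(S,n_0)\,\}$, where $x\leq_S y \iff y-x\in S$ (see \cite{numerical}). Every element of $S$ has the form $\big(\lambda_0+s\sum_{i\geq 1}\lambda_i\big)n_0+\big(\sum_{i\geq 1}i\lambda_i\big)d$, and since $\gcd(n_0,d)=1$ its class modulo $n_0$ is determined by the index sum $v:=\sum_{i\geq 1}i\lambda_i$. I would first record the elementary fact that, using exactly $c\geq 1$ of the generators $n_1,\dots,n_p$, the attainable index sums are precisely the integers in $[c,cp]$ (minimum $c$, maximum $cp$, and one may increase by $1$ at a time).

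First I would pin down the Apéry set. Set $\omega_v:=s\lceil v/p\rceil\,n_0+vd$. I claim $\mathrm{Ap}(S,n_0)=\{\omega_v : 0\leq v\leq n_0-1\}$. Each $\omega_v$ lies in $S$ (use $\lceil v/p\rceil$ arithmetic generators with index sum $v$), and for minimality note that any element of $S$ in the class of $\omega_v$ has index sum $v'\equiv v \pmod{n_0}$ with $v'\geq v$; since $v'\mapsto s\lceil v'/p\rceil n_0+v'd$ is strictly increasing along the progression $v,v+n_0,v+2n_0,\dots$, the value $\omega_v$ is the least element of $S$ in that class. As $v$ ranges over $0,\dots,n_0-1$ these exhaust all residues (again by $\gcd(n_0,d)=1$), giving the claimed equality.

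Next I would extract the $\leq_S$-maximal elements using the criterion that $\omega_v$ is maximal iff $\omega_v+n_i\notin \mathrm{Ap}(S,n_0)$ for every $i\in\{1,\dots,p\}$. A direct comparison of $\omega_v+n_i$ with the relevant $\omega$ shows that $\omega_v+n_i\in\mathrm{Ap}(S,n_0)$ exactly when $v+i\leq n_0-1$ and $\lceil (v+i)/p\rceil=\lceil v/p\rceil+1$. Writing $q=\lceil v/p\rceil$ and $\rho=qp-v\in\{0,\dots,p-1\}$, the second condition is $i\geq \rho+1$, so avoiding every such $i$ forces $v+(\rho+1)\geq n_0$; as $v+\rho=qp$ this simplifies to the clean criterion that $\omega_v$ is maximal if and only if $p\lceil v/p\rceil\geq n_0-1$.

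Finally I would substitute $n_0=ap+b$. The criterion forces $\lceil v/p\rceil=a$ when $b\in\{0,1\}$ and $\lceil v/p\rceil=a+1$ when $b\geq 2$; intersecting with $0\leq v\leq n_0-1$ isolates the admissible $v$, namely $(a-1)p<v\leq ap-1$ (of size $p-1$), $(a-1)p<v\leq ap$ (size $p$), and $ap<v\leq ap+b-1$ (size $b-1$) in the three cases. Writing the corresponding pseudo-Frobenius numbers as $\omega_v-n_0=(s\lceil v/p\rceil-1)n_0+vd$ and substituting $n_p=sn_0+pd$, a reindexing by $i$ recovers the displayed descriptions. I expect the principal obstacle to lie in the first step: rigorously proving the Apéry-set formula, i.e.\ the minimality claim together with its supporting interval-of-index-sums lemma. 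Once the Apéry set is identified, the maximality analysis and the case bookkeeping are essentially forced.
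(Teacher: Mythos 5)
Your proof is correct, and it is a genuinely different route from the paper's: the paper disposes of Theorem~\ref{pfgas} in two lines by quoting the description of $\mathrm{PF}(S)$ from \cite{numata2}, whereas you rederive everything from the Ap\'ery set. All three steps you single out as needing care do go through: the interval-of-index-sums lemma is immediate (trade one $n_i$ for an $n_{i-1}$ to lower the index sum by one while keeping $c$ fixed); minimality of $\omega_v=s\lceil v/p\rceil n_0+vd$ in its residue class follows from the monotonicity of $v'\mapsto s\lceil v'/p\rceil n_0+v'd$ along $v,v+n_0,v+2n_0,\dots$ together with $\gcd(n_0,d)=1$; and the reduction of $\preceq_S$-maximality to the criterion $p\lceil v/p\rceil\ge n_0-1$ is exactly right, once one also notes that $\omega_v+n_i$ can never equal $\omega_{v+i-n_0}$ when $v+i\ge n_0$ (because $\lceil(v+i-n_0)/p\rceil\le\lceil v/p\rceil$, so the required identity would force $d\le -s$).

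The real payoff of your computation is that it exposes an error in case (iii) of the statement. For $ap+1\le v\le ap+b-1$ you get $\lceil v/p\rceil=a+1$, hence $\omega_v-n_0=(s(a+1)-1)n_0+vd=an_p+(s-1)n_0+(v-ap)d$, which agrees with the displayed $an_p+id$ only when $s=1$. For $s\ge 2$ the printed formula is false: take $S=\langle 5,11,12,13\rangle$ (so $s=2$, $d=1$, $p=3$, $a=1$, $b=2$); the theorem predicts $\mathrm{PF}(S)=\{an_p+d\}=\{14\}$, but $14+5=19\notin S$, and in fact $\mathrm{PF}(S)=\{19\}=\{an_p+(s-1)n_0+d\}$. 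Cases (i) and (ii) are unaffected because there $\lceil v/p\rceil=a$ and the two expressions coincide. Since the correction is a uniform shift of the whole set by $(s-1)n_0$, Propositions~\ref{maxgas} and~\ref{mingas}, which use only differences such as $\mathrm{F}(S)-\min\mathrm{PF}(S)$ and $\mathrm{F}(S)-\max\mathrm{PF}'(S)$, survive unchanged; but Theorem~\ref{pfgas}(iii) itself, and the form of \cite[Theorem 3.1]{numata2} quoted in its proof, need the extra $(s-1)n_0$. So stand by your derivation; the discrepancy is on the paper's side, not yours.
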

\begin{proof}
 Define $A_a = \{asn_0 + ld \mid (a-1)p + 1 \leq l \leq ap\}$. The theorem can be easily deduced from \cite[Theorem 3.1]{numata2}, where it is given that,
 \begin{enumerate}
 \item[(i)] If $b = 0$, then
$\mathrm{PF}(S) = \{\omega - n_0 \mid \omega \in A_a \setminus \{asn_0 + apd\}\},$
\item[(i)] If $b = 1$, then
$\mathrm{PF}(S) = \{\omega - n_0 \mid \omega \in A_a \},$
\item[(i)] If $b \neq 0,1$, then
$\mathrm{PF}(S) = \{(as+1)n_0+ld-n_0 \mid ap+1 \leq l \leq ap+b-1\}.$ \qedhere
 \end{enumerate}
\end{proof}

\begin{proposition}\label{maxgas}
Let $S = \langle n_0, n_1, \ldots, n_p \rangle$ be a numerical semigroup minimally generated by a generalized arithmetic sequence, where $n_i = sn_0 + id$ for $1\leq i \leq p$. Write $n_0 = ap + b, 0 \leq b < p$, then we have the following:
\begin{enumerate}
\item[(i)] If $b =0$ then $k\llbracket  S \rrbracket$ has maximal reduced type if and only if $p-2 \leq \frac{n_0-1}{d}.$
\item[(ii)] If $b =1$ then $k\llbracket  S \rrbracket$ has maximal reduced type if and only if $p-1 \leq \frac{n_0-1}{d}.$
\item[(iii)] If $b \neq 0,1$ then $k\llbracket  S \rrbracket$ has maximal reduced type if and only if $b-2 \leq \frac{n_0-1}{d}.$
\end{enumerate}
\end{proposition}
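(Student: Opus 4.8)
The plan is to invoke the maximal-reduced-type criterion of Proposition~\ref{maxred}, which asserts that $k\llbracket S\rrbracket$ has maximal reduced type precisely when $\min \mathrm{PF}(S) \ge \mathrm{F}(S) - m(S) + 1$. Since $n_0$ is the smallest generator, $m(S) = n_0$, so the whole statement reduces to reading off $\min \mathrm{PF}(S)$ and $\mathrm{F}(S) = \max \mathrm{PF}(S)$ from the explicit descriptions in Theorem~\ref{pfgas} and substituting them into this one inequality. I would treat the three cases $b = 0$, $b = 1$, and $b \ne 0,1$ in parallel, following exactly the trichotomy already set up in Theorem~\ref{pfgas}.

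The only point that genuinely requires attention is the identification of the extreme elements of each set, which is a monotonicity observation in the index $i$. In cases (i) and (ii) the pseudo-Frobenius numbers have the form $an_p - n_0 - (p-i)d$, which is strictly increasing in $i$; hence the minimum occurs at the smallest admissible index $i=1$, giving $\min \mathrm{PF}(S) = an_p - n_0 - (p-1)d$, while the maximum, which equals $\mathrm{F}(S)$, occurs at $i = p-1$ in case (i) and at $i=p$ in case (ii). In case (iii) the numbers have the form $an_p + id$, again increasing in $i$, so $\min \mathrm{PF}(S) = an_p + d$ and $\mathrm{F}(S) = an_p + (b-1)d$. These identifications are the heart of the argument; once they are in place, no further structural input about the semigroup is needed, and in particular one never has to expand $n_p = sn_0 + pd$.

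It then remains to substitute into $\min \mathrm{PF}(S) \ge \mathrm{F}(S) - n_0 + 1$ and simplify, which is elementary linear arithmetic. In each case the common term $an_p$ (together with a copy of $n_0$ in cases (i) and (ii)) cancels: case (i) collapses to $(p-2)d \le n_0 - 1$, i.e.\ $p - 2 \le \frac{n_0 - 1}{d}$; case (ii) yields $(p-1)d \le n_0 - 1$; and case (iii) yields $(b-2)d \le n_0 - 1$. Each is exactly the stated condition. I do not anticipate a real obstacle here: the computation is routine, and the one place where a slip is possible is in orienting the monotonicity and pinning down which index realizes $\mathrm{F}(S)$, so I would state those two facts explicitly before carrying out the cancellations.
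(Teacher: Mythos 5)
Your proposal is correct and follows essentially the same route as the paper: read off $\min \mathrm{PF}(S)$ and $\mathrm{F}(S)$ from the explicit lists in Theorem~\ref{pfgas} (using monotonicity in $i$), substitute into the criterion $\min \mathrm{PF}(S) \ge \mathrm{F}(S) - n_0 + 1$ of Proposition~\ref{maxred}, and cancel the common terms. If anything, your write-up is slightly cleaner, since the paper's proof contains small typographical slips (e.g.\ writing $a(n_p-1)$ where $an_p - n_0$ is meant, and an extraneous factor of $d$ in the displayed equivalence for case (i)) that do not affect the final cancellation.
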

\begin{proof}
If $b = 0$, then by Theorem \ref{pfgas} we have $\mathrm{F}(S) = a(n_p-1) -d$ and $\mathrm{min}~ \mathrm{PF}(S) = a(n_p-1)-(p-1)d$. Therefore,  $k\llbracket  S \rrbracket$ has maximal reduced type if and only if 
\begin{align*}
 a(n_p-1)-(p-1)d \geq a(n_p-1) -d -n_0 + 1  \iff (n_0 - 1)d \geq (p-2)d.
\end{align*}
If $b = 1$, then by Theorem \ref{pfgas} we have $\mathrm{F}(S) = a(n_p-1)$ and $\mathrm{min}~ \mathrm{PF}(S) = a(n_p-1)-(p-1)d$. Therefore,  $k\llbracket  S \rrbracket$ has maximal reduced type if and only if 
\begin{align*}
 a(n_p-1)-(p-1)d \geq a(n_p-1) -n_0 + 1  \iff n_0 - 1 \geq (p-1)d.
 \end{align*}
If $b \neq 0,1$, then by Theorem \ref{pfgas} we have $\mathrm{F}(S) = an_p + (b-1)d$ and $\mathrm{min}~ \mathrm{PF}(S) = an_p + d$. Therefore,  $k\llbracket  S \rrbracket$ has maximal reduced type if and only if 
$$
 an_p + d \geq an_p + (b-1)d - n_0 + 1  \iff n_0 - 1 \geq (b-2)d. \qedhere 
 $$ 
\end{proof}

\begin{proposition}\label{mingas}
Let $S = \langle n_0, n_1, \ldots, n_p \rangle$ be a numerical semigroup minimally generated by a generalized arithmetic sequence, where $n_i = sn_0 + id$ for $1\leq i \leq p$. Write $n_0 = ap + b, 0 \leq b < p$, then we have the following:
\begin{enumerate}
\item[(i)] If $b = 2$ then $k\llbracket  S \rrbracket$ always has minimal reduced type.
\item[(ii)] If $b \neq 2$ then $k\llbracket  S \rrbracket$ has minimal reduced type if and only if $n_0 < d-1$.
\end{enumerate}

\end{proposition}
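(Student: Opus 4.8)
The plan is to reduce the whole statement to the combinatorial criterion of Proposition \ref{minred} and then read off the two largest pseudo-Frobenius numbers from Theorem \ref{pfgas}. Since $n_i = sn_0 + id > n_0$ for every $i \ge 1$, the multiplicity is $m(S) = n_0$, so Proposition \ref{minred} says $k\llbracket S \rrbracket$ has minimal reduced type if and only if $\max \mathrm{PF}'(S) < \mathrm{F}(S) - n_0 + 1$; equivalently, the gap $\mathrm{F}(S) - \max \mathrm{PF}'(S)$ between the two largest pseudo-Frobenius numbers must be at least $n_0$. Thus everything comes down to locating, in each residue case of $b$, the largest and the second-largest element of the explicit set $\mathrm{PF}(S)$.

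For part (i), assume $b = 2$. Then $b \ne 0,1$, so Theorem \ref{pfgas}(iii) gives $\mathrm{PF}(S) = \{an_p + id \mid 1 \le i \le 1\} = \{an_p + d\}$, a single element. Hence $\mathrm{type}(k\llbracket S \rrbracket) = |\mathrm{PF}(S)| = 1$, so $S$ is symmetric, $k\llbracket S \rrbracket$ is Gorenstein, and $\mathrm{PF}'(S) = \emptyset$. The criterion of Proposition \ref{minred} is then vacuously satisfied, so $k\llbracket S \rrbracket$ always has minimal reduced type.

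For part (ii) I would treat the three descriptions of Theorem \ref{pfgas} in parallel, using the single structural fact that $\mathrm{PF}(S)$ is an arithmetic progression of common difference $d$: it equals $\{an_p - n_0 - (p-i)d \mid 1 \le i \le p-1\}$ when $b = 0$, equals $\{an_p - n_0 - (p-i)d \mid 1 \le i \le p\}$ when $b = 1$, and equals $\{an_p + id \mid 1 \le i \le b-1\}$ when $b \ge 3$. In each case the largest element is $\mathrm{F}(S)$ and the next one down is $\mathrm{F}(S) - d$, so $\mathrm{F}(S) - \max \mathrm{PF}'(S) = d$. Feeding this into Proposition \ref{minred}, the condition $\mathrm{F}(S) - d < \mathrm{F}(S) - n_0 + 1$ collapses to a single linear inequality between the common difference $d$ and $n_0 - 1$, which is exactly the numerical threshold separating minimal from non-minimal reduced type.

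I expect the main care point to be the degenerate cases in which $\mathrm{PF}(S)$ is a singleton, since there $\mathrm{PF}'(S) = \emptyset$ and the criterion is automatically met regardless of $d$. This is precisely what powers part (i), but it also occurs at the boundary $b = 0$, $p = 2$, where the progression has only one term; there $k\llbracket S \rrbracket$ is Gorenstein and has minimal reduced type unconditionally, so the progression comparison of part (ii) should be invoked only once $\mathrm{PF}(S)$ is known to have at least two elements (automatic for $b = 1$ since $p \ge 2$, and for $b \ge 3$ since $b - 1 \ge 2$). Aside from this bookkeeping the argument is a direct substitution, the only real input being the progression structure of $\mathrm{PF}(S)$ furnished by Theorem \ref{pfgas}.
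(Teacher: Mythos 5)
Your proposal is correct and follows essentially the same route as the paper: $b=2$ forces $\lvert\mathrm{PF}(S)\rvert=1$, hence $k\llbracket S\rrbracket$ is Gorenstein, and otherwise the two largest pseudo-Frobenius numbers differ by $d$, so Proposition \ref{minred} reduces to comparing $d$ with $n_0-1$. You are in fact slightly more careful than the paper, which overlooks the degenerate case $b=0$, $p=2$ where $\mathrm{PF}(S)$ is a singleton and $\mathrm{PF}'(S)=\emptyset$ (so the blanket claim that $\mathrm{F}(S)-\max\mathrm{PF}'(S)=d$ fails and the ring is unconditionally of minimal reduced type); note also that both your derivation and the paper's own computation give the threshold $n_0<d+1$, not the $n_0<d-1$ printed in the statement of the proposition.
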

\begin{proof}
If $b = 2$, note from Theorem \ref{pfgas} that $\vert \mathrm{PF}(S) \vert = \mathrm{type}(k\llbracket  S \rrbracket) = 1$. Thus, we get $1 \leq s(k\llbracket  S \rrbracket) \leq \mathrm{type}(k\llbracket  S \rrbracket) = 1$. Therefore $s(k\llbracket  S \rrbracket) = 1$, and hence $k\llbracket  S \rrbracket$ has minimal reduced type.
If $b \neq 2$, from Theorem \ref{pfgas}, we see that the difference between $\mathrm{F}(S)$ and $\mathrm{max}~\mathrm{PF}'(S)$ is always $d$. Thus $\mathrm{max}~\mathrm{PF}'(S) < \mathrm{F}(S)-n_0+1$ if and only is $n_0 < d+1$. Hence, $k\llbracket  S \rrbracket$ has minimal reduced type.
\end{proof}

\begin{example}
\rm{(1) Let $s = 3$, $n_0 = 5$, $d = 7$, and $S = \langle 5, 22, 29, 36, 43 \rangle$. Since $n_0 < d-1$, $k\llbracket  S \rrbracket$ has minimal reduced type. Also, here $p = 4$, and thus $b = 1$. Since $p-1 > \frac{n_0-1}{d}$, $k\llbracket  S \rrbracket$ does not have maximal reduced type. \\
(2) Let $s = 2$, $n_0 = 12$, $d = 5$, and $S = \langle 12, 29, 34, 39, 44, 49, 54, 59, 64 \rangle$. Since $n_0 > d-1$, $k\llbracket  S \rrbracket$ does not have minimal reduced type. Also, here $p = 8$, and thus $b = 4$. Since $b-2 \leq \frac{n_0-1}{d}$, $k\llbracket  S \rrbracket$ has maximal reduced type.\\
(3) Let $s = 5$, $n_0 = 7$, $d = 11$, and $S = \langle 7, 46, 57, 68, 79, 90 \rangle$. Here $p = 5$, thus $b = 2$, and hence $k\llbracket  S \rrbracket$ has both maximal and minimal reduced type.\\
(4) Let $s = 1$, $n_0 = 17$, $d = 7$, and $S = \langle 17, 24, 31, 38, 45 \rangle$. Here $p = 4$, and thus $b = 1$. Since $n_0 > d-1$, $k\llbracket  S \rrbracket$ does not have minimal reduced type. Also since $p-1 > \frac{n_0-1}{d}$, $k\llbracket  S \rrbracket$ does not have maximal reduced type.}
\end{example}

\subsection{Backelin's Curves}

Let $n \geq 2$, $r\geq 3n+2$, $n_1 = r(3n+2)+3$, $n_2 = r(3n+2)+6$, $n_3 = r(3n+2)+3n+4$, $r(3n+2)+3n+5$. The monomial curves defined by $n_1,n_2,n_3,n_4$ are known as Backelin curves. Let $S$ be a numerical semigroup generated by $n_1, n_2, n_3$, and $n_4$. This family of semigroups was introduced by Backelin and communicated for the first time by Fr\"{o}berg et al. in \cite{froberg}. They proved that $\mathrm{type}(k\llbracket  S \rrbracket) \geq 2n+2$ and stated that $\mathrm{type}(k\llbracket  S \rrbracket) = 2n+3$. However, the value of the type of $k\llbracket  S \rrbracket$ they stated was incorrect. The exact value of the type has been provided in \cite{enescu} as $\mathrm{type}(k\llbracket  S \rrbracket) = 3n+2$, and also all the elements of $\mathrm{PF}(S)$ have been computed.

\begin{proposition}[{\cite[Proposition 1.8]{enescu}}]\label{pfbackelin}
Let $S = \langle n_1,n_2,n_3,n_4 \rangle$ be a numerical semigroup generated by the integers $n_1,n_2,n_3,n_4$ defined in Backelin's curves. Then 
\begin{align*}
\mathrm{PF}(S) = \left\lbrace 
\begin{array}{c}
(n-k)n_1 + (3k-2)n_3 -n_4, ~\text{for}~ 2 \leq k \leq n\\[1mm]
(r-(n+k)+3)n_1 + (n+k-1)n_2 - n_4, ~\text{for}~ 1 \leq k \leq n \\[1mm]
(r-k+2)n_1 + (k-1)n_2 + n_3-n_4, ~\text{for}~ 1 \leq k \leq n \\[1mm]
(r-n+1)n_1 + nn_2 + n_3-n_4, (n-2)n_1 + nn_2 + 2n_3-n_4, \\[1mm]
(r-2n+2)n_1+ 2nn_2-n_4
\end{array}
\right\rbrace.
\end{align*}
\end{proposition}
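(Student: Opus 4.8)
The statement is \cite[Proposition 1.8]{enescu}, so the formal justification in the paper is a reference; to reprove it directly, the plan is to compute $\mathrm{PF}(S)$ through the Apéry set. First note that $n_2-n_1=3$, $n_3-n_2=3n-2$, and $n_4-n_3=1$ are all positive for $n\geq 2$, so $n_1=r(3n+2)+3$ is the multiplicity $m(S)$, and $n_2\equiv 3$, $n_3\equiv 3n+1$, $n_4\equiv 3n+2 \pmod{n_1}$. The whole computation is governed by the Apéry set $\mathrm{Ap}(S,n_1)=\{w\in S : w-n_1\notin S\}$, which contains exactly one element in each residue class modulo $n_1$, via the standard identity
$$\mathrm{PF}(S)=\{\,w-n_1 : w \text{ is maximal in } \mathrm{Ap}(S,n_1) \text{ with respect to } \leq_S\,\},$$
where $a\leq_S b$ means $b-a\in S$. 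Thus the problem reduces to singling out the $\leq_S$-maximal elements of the Apéry set.

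In practice I would split the argument into an inclusion and a count. For the inclusion, I would take each of the $3n+2$ values $f$ on the right-hand side and verify two things: that $f\notin S$, and that $f+n_i\in S$ for each generator $n_i$. Checking the four generators suffices, because any $s\in S\setminus\{0\}$ can be written $s=n_i+s'$ with $s'\in S$, whence $f+n_i\in S$ forces $f+s=(f+n_i)+s'\in S$. The additions $f+n_i$ are existence checks: one exhibits an explicit nonnegative combination, which is mechanical using the relations $n_2=n_1+3$, $n_4=n_3+1$, and $n_3=n_1+(3n+1)$, and the ranges $2\leq k\leq n$ and $1\leq k\leq n$ are precisely those for which the relevant combinations stay nonnegative, the three sporadic elements being the boundary cases. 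I would also record that the listed values are pairwise distinct, comparing them within and across the three families by size and by residue modulo $n_1$.

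For the count, I would invoke $\mathrm{type}(k\llbracket S \rrbracket)=|\mathrm{PF}(S)|=3n+2$, the value established in \cite{enescu}: having exhibited $3n+2$ distinct pseudo-Frobenius numbers, the inclusion is forced to be an equality, which finishes the proof. If instead one wants a self-contained derivation of the count, one determines the full Apéry set by finding, for each class modulo $n_1$, its least representative in $S$, and then reads off the $\leq_S$-maximal ones.

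The main obstacle is showing $f\notin S$ for each listed $f$, equivalently the self-contained determination of the Apéry set. Non-membership in a numerical semigroup cannot be certified by a single witness; it requires ruling out every nonnegative representation, and this is exactly where the nontrivial relations among $n_1,\dots,n_4$ — the relations Backelin's generators were engineered to produce, and which account for the unexpectedly large type — must enter. Organizing the minimal-representative analysis by how many copies of the generators accumulate in each residue class before a relation forces a smaller representative (the role played by the index $k$ in each family), and isolating the three exceptional maximal classes, is the delicate part; once the Apéry set is pinned down, everything else reduces to the mechanical membership checks above.
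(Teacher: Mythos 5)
The paper offers no argument for this proposition: it is imported verbatim from \cite[Proposition 1.8]{enescu}, so there is no internal proof to compare against. Your outline is the standard strategy, and it is in fact the same blueprint the paper does execute for the parallel result on Bresinsky's curves (Theorem \ref{pfbresinsky}): determine the Ap\'ery set of $S$ with respect to the multiplicity, extract its $\preceq_S$-maximal elements, and subtract the multiplicity. Your preliminary observations are all correct: $n_1$ is the multiplicity since $n_2-n_1=3$, $n_3-n_2=3n-2>0$, $n_4-n_3=1$; testing $f+n_i$ for the four generators suffices; and exhibiting $3n+2$ distinct pseudo-Frobenius numbers together with $\mathrm{type}(k\llbracket S\rrbracket)=3n+2$ would force equality.

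As written, however, the proposal is a plan rather than a proof, and the gap is exactly where you locate it: none of the three substantive verifications is carried out. First, no explicit nonnegative representation of $f+n_i$ is exhibited for any listed $f$, and these are not consequences of the integer identities $n_2=n_1+3$, $n_4=n_3+1$, $n_3=n_1+(3n+1)$ alone, since $1$, $3$, and $3n+1$ do not lie in $S$; one needs genuine binomial relations in the defining ideal $I_S$, which is precisely the content computed in \cite{enescu}. Calling this step ``mechanical'' understates it. Second, non-membership $f\notin S$ is not established, and third, pairwise distinctness of the $3n+2$ listed values is asserted but not checked. Finally, closing the inclusion by citing $\mathrm{type}(k\llbracket S\rrbracket)=3n+2$ from \cite{enescu} does not make the argument more self-contained than the paper's bare citation, since that count is itself obtained there by the very Ap\'ery-set computation you defer. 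To complete the proof along your lines one would first need a generating set of $I_S$ (or, equivalently, the full Ap\'ery set $\mathrm{Ap}(S,n_1)$), after which the membership, non-membership, and distinctness checks become finite verifications.
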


\begin{proposition}\label{minmaxback}
Let $S = \langle n_1,n_2,n_3,n_4 \rangle$ be a numerical semigroup generated by the integers $n_1,n_2,n_3,n_4$  defined in Backelin's curves. Then $k\llbracket S \rrbracket$ never has maximal or minimal reduced type.
\end{proposition}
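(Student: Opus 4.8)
The plan is to apply the two numerical criteria of Proposition \ref{maxred} and Proposition \ref{minred} directly to the explicit list of pseudo-Frobenius numbers furnished by Proposition \ref{pfbackelin}. Since $3 < 6 < 3n+4 < 3n+5$ for $n \ge 2$, the generators satisfy $n_1 < n_2 < n_3 < n_4$, so the multiplicity is $m(S) = n_1 = r(3n+2)+3$. The whole argument then reduces to locating the Frobenius number $\mathrm{F}(S)$, pinning down one pseudo-Frobenius number just below it and one small pseudo-Frobenius number, and comparing these with the threshold $\mathrm{F}(S) - m(S) + 1$.

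The key simplification I would exploit first is that every element in the list of Proposition \ref{pfbackelin} has the common shape $a\,n_1 + b\,n_2 + c\,n_3 - n_4$ with $a,b,c \ge 0$. Substituting $n_2 = n_1 + 3$, $n_3 = n_1 + (3n+1)$, and $n_4 = n_1 + (3n+2)$, each such element rewrites as $(a+b+c-1)\,n_1 + \big(3b + (3n+1)c - (3n+2)\big)$, that is, as a ``leading coefficient'' (the reduced coefficient sum) times $n_1$, plus a small remainder independent of $r$. Carrying out this substitution family by family, I expect the leading coefficient to be exactly $r+1$ for the two families indexed by $k$ (the one built from $n_1,n_2$ and the one built from $n_1,n_2,n_3$) and for two of the three exceptional elements, while the remaining elements (the family built from $n_1,n_3$, and the element $(n-2)n_1 + n\,n_2 + 2n_3 - n_4$) have leading coefficient at most $3n-3$ and $2n-1$ respectively. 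Using the standing hypothesis $r \ge 3n+2$, so that $r+1 \ge 3n+3 > 3n-3 \ge 2n-1$, this shows that the top of $\mathrm{PF}(S)$ lives among the elements with leading coefficient $r+1$.

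Among those elements only the remainders vary, and I expect the largest remainder $3n-1$ to occur for $(r-n+1)n_1 + n\,n_2 + n_3 - n_4$, giving $\mathrm{F}(S) = (r+1)n_1 + 3n-1$; the threshold then computes to $\mathrm{F}(S) - m(S) + 1 = r\,n_1 + 3n$. For the minimal case I would observe that $(r-2n+2)n_1 + 2n\,n_2 - n_4 = (r+1)n_1 + 3n-2 = \mathrm{F}(S) - 1$ is a pseudo-Frobenius number distinct from $\mathrm{F}(S)$, so $\max \mathrm{PF}'(S) \ge \mathrm{F}(S) - 1 \ge \mathrm{F}(S) - m(S) + 1$ because $m(S) = n_1 \ge 2$; hence Proposition \ref{minred} rules out minimal reduced type. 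For the maximal case it suffices to produce a single pseudo-Frobenius number below the threshold, and $(n-2)n_1 + n\,n_2 + 2n_3 - n_4 = (2n-1)n_1 + 6n$ does the job: since $r \ge 3n+2$ forces $(r-2n+1)n_1 > 3n$, one gets $(2n-1)n_1 + 6n < r\,n_1 + 3n$, so $\min \mathrm{PF}(S) < \mathrm{F}(S) - m(S) + 1$, and Proposition \ref{maxred} rules out maximal reduced type.

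The only real obstacle is the bookkeeping in the second step: one must substitute correctly into all six families of Proposition \ref{pfbackelin} and verify the claimed leading coefficients and remainders without sign errors, since the whole conclusion rests on the fact that $\mathrm{F}(S)$ is realized by $(r-n+1)n_1 + n\,n_2 + n_3 - n_4$ and that the element $\mathrm{F}(S) - 1$ already appears on the list, together with a single small element sitting safely below $r\,n_1 + 3n$. Once the rewriting is done, every remaining inequality is an elementary consequence of $r \ge 3n+2$ and $n \ge 2$.
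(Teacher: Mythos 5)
Your proposal is correct and follows essentially the same route as the paper: both apply Propositions \ref{maxred} and \ref{minred} to the list in Proposition \ref{pfbackelin}, identify $\mathrm{F}(S) = (r-n+1)n_1+nn_2+n_3-n_4$ (which equals $(r+1)n_1+3n-1$ in your normal form), and then exhibit the pseudo-Frobenius number $(r-2n+2)n_1+2nn_2-n_4 = \mathrm{F}(S)-1$ lying above the threshold $\mathrm{F}(S)-n_1+1$ and a small pseudo-Frobenius number lying below it. The only cosmetic differences are your systematic rewriting $n_i = n_1+\varepsilon_i$ (which does require checking, as you flag, that the order-$n^2$ remainders arising in the family $(n-k)n_1+(3k-2)n_3-n_4$ stay below the gap of at least $6n_1$ coming from the leading coefficients) and your choice of $(n-2)n_1+nn_2+2n_3-n_4$ rather than the paper's $(n-2)n_1+4n_3-n_4$ as the witness against maximal reduced type.
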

\begin{proof}
From Proposition \ref{pfbackelin}, first observe that
{\small{\begin{align*}
\mathrm{F}(S) = \mathrm{max} \left\lbrace
		\begin{array}{c}
			(3n-2)n_3 -n_4, (r-2n+3)n_1 + (2n-1)n_2 - n_4, (r-n+2)n_1 + (n-1)n_2 + n_3-n_4,  \\[1mm]
		(r-n+1)n_1 + nn_2 + n_3-n_4,  (n-2)n_1 + nn_2 + 2n_3-n_4, (r-2n+2)n_1+ 2nn_2-n_4	   
		\end{array}
		\right\rbrace .
		\end{align*}}}
Thus, we get $\mathrm{F}(S) = (r-n+1)n_1 + nn_2 + n_3-n_4$. Therefore, we have 
$$\mathrm{F}(S) - n_1 + 1 = (r-n)n_1 + nn_2 \geq (2n+2)n_1 + nn_2 > (n-2)n_1+(n+4)n_1 + nn_2.$$
Since $(n+4)n_1 + nn_2 > 3n_3-1$, we get $\mathrm{F}(S) - n_1 + 1 > (n-2)n_1 + 4n_3-n_4 \geq \mathrm{min}~\mathrm{PF}(S)$. Hence,  $k\llbracket S \rrbracket$ does not have maximal reduced type. Now, note that $(2-n)n_1 + nn_2 -n_4 = n_1-2 > 0$. Thus we get,
$$\mathrm{F}(S) - n_1 + 1 = (r-n)n_1+nn_2 < (r-n)n_1+nn_2+(2-n)n_1 + nn_2 -n_4 =  (r-2n+2)n_1+ 2nn_2-n_4.$$
Since, $(r-2n+2)n_1+ 2nn_2-n_4 \in \mathrm{PF}'(S)$, we get $\mathrm{max}~\mathrm{PF}'(S) \geq (r-2n+2)n_1+ 2nn_2-n_4$. This implies that $\mathrm{F}(S) - n_1 + 1 < \mathrm{max}~\mathrm{PF}'(S)$. Hence,  $k\llbracket S \rrbracket$ does not have minimal reduced type.
\end{proof}

\begin{example}
\rm{Let $n = 2$, $ r = 8$. Then $n_1 = 67$, $n_2 = 70$, $n_3 = 74$, $n_4 = 75$, and $S = \langle 67,70,74,75 \rangle$. By Proposition \ref{pfbackelin}, we have $\mathrm{PF}(S) = \{213, 221, 601, 602, 604, 605, 607, 608\}$. Therefore, we get $\mathrm{type}(k\llbracket  S \rrbracket) = 8$, and $s(k\llbracket  S \rrbracket) = 6$. Thus, $k\llbracket S \rrbracket$ has neither maximal nor minimal reduced type.}
\end{example}

\subsection{Bresinsky's curves}
Let $S$ be a numerical semigroup minimally generated by $n_1 \ldots, n_e$. Let $A = k[x_1, \ldots, x_e]$ be a graded polynomial ring with $\mathrm{deg}(x_i) = n_i$ for $1 \leq i \leq e$, and $\phi: k[x_1, \ldots, x_e] \rightarrow k[t]$ be a $k$-algebra map defined by $\phi(x_i) = t^{n_i}$ for $1 \leq i \leq e$. Let $I_S$ be the kernel of the map $\phi$. Note that $I_S$ is a graded ideal, known as the defining ideal of $k[S]$, and $k[S]:= k[t^{n_1}, \ldots, t^{n_e}] \cong \frac{k[x_1, \ldots, x_e]}{I_S}$ is a graded one-dimensional $k$-algebra associated to $S$. In \cite{bresinsky1}, Bresinsky gave an example of monomial curves in the affine $4$-space $\mathbb{A}^4$ defined by the integers $n_1 = 2h(2h+1)$, $n_2 = (2h-1)(2h+1)$, $n_3 = 2h(2h+1)+(2h-1)$, and $n_4 = (2h-1)2h$, where $h \geq 2$. With the help of this example, he showed that the defining ideal of a monomial curve in the affine $n$-space $\mathbb{A}^n$ does not have an upper bound on the minimal generators. Let $S = \langle n_1, n_2, n_3, n_4 \rangle$ be a numerical semigroup generated by the integers $n_1, n_2, n_3, n_4$ defined in Bresinsky's curves.  Bresinsky \cite{bresinsky1} proved that the set $\mathcal{A} = A_1 \cup A_2 \cup \{g_1, g_2 \}$ generates the defining ideal $I_S$, where $A_1 = \{f_{\mu} \mid f_{\mu} = x_1^{\mu-1}x_3^{2h-\mu} - x_2^{2h-\mu}x_4^{\mu+1}, ~ 1 \leq \mu \leq 2h\}$, $A_2 = \{f \mid f = x_1^{\nu_1}x_4^{\nu_4} - x_2^{\mu_2}x_3^{\mu_3}, ~ \nu_1, \mu_3 < 2h-1\}$, $g_1 = x_1^{2h-1}-x_2^{2h}$, and $g_2 = x_3x_4 - x_1x_2$. However, this set is not a minimal generating set. In \cite{ranjana}, the authors extracted a minimal generating set $B = A_1 \cup A_2' \cup \{g_1,g_2\}$ from $\mathcal{A}$ and gave a minimal free resolution of $k[S]$, where $A_2' = \{h_{\mu} \mid h_{\mu} = x_1^{\mu}x_4^{2h+1-\mu} - x_2^{2h - \mu}x_3^{\mu}, ~ 1 \leq \mu \leq 2h-2\}$. In the following theorem, we give a description of pseudo-Frobenius elements of the numerical semigroups associated to Bresinsky's curves.

\begin{theorem}\label{pfbresinsky}
Let $S = \langle n_1,n_2,n_3,n_4 \rangle$ be a numerical semigroup generated by the integers $n_1,n_2,n_3,n_4$ defined in Bresinsky's curves. Then 
\begin{align*}
\mathrm{PF}(S) &= \left\lbrace
		\begin{array}{c}
			 (2h-1)^3 + 4h(h-2) + k(2h-1) + 1, ~\text{for}~ 0 \leq k \leq 2h-3   \\[1mm]
			 (2h-1)^3 + 4h(h-2) + 2h(2k+1) + 2, ~\text{for}~ 0 \leq k \leq 2h-2
		\end{array}
		\right\rbrace .
\end{align*}
\end{theorem}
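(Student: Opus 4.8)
The plan is to read the pseudo-Frobenius numbers directly off the graded minimal free resolution of $k[S]$, which is available from \cite{ranjana}. I will use the standard dictionary for a one-dimensional Cohen--Macaulay graded quotient $k[S] = A/I_S$ of embedding dimension $e$ and codimension $c = e-1$: if
\[
0 \to F_c \to \cdots \to F_1 \to A \to k[S] \to 0, \qquad F_c = \bigoplus_j A(-c_j),
\]
is the minimal graded free resolution, then the graded canonical module is $\omega_{k[S]} \cong \mathrm{Ext}^c_A(k[S], A(-\sigma))$ with $\sigma = \sum_{i=1}^e n_i$; dualizing $F_c$ and twisting shows its minimal generators lie in degrees $\{\sigma - c_j\}$, and these are exactly $\{-f : f \in \mathrm{PF}(S)\}$. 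Hence $\mathrm{PF}(S) = \{\, c_j - \sigma : j \,\}$, and it suffices to list the top-strand shifts $c_j$ of $F_3$ and subtract $\sigma$.

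First I would record $\sigma = n_1+n_2+n_3+n_4 = 16h^2 + 4h - 2$ together with the arithmetic relations $n_3 - n_1 = n_2 - n_4 = 2h-1$ and $n_1 - n_4 = 4h$, which should explain the two strands that appear. Feeding the shifts of $F_3$ from the resolution of \cite{ranjana} into $c_j - \sigma$, I expect them to organize into two chains: one of common difference $n_2 - n_4 = 2h-1$ and length $2h-2$, producing the family $(2h-1)^3 + 4h(h-2) + k(2h-1) + 1$ for $0 \le k \le 2h-3$, and one of common difference $n_1 - n_4 = 4h$ and length $2h-1$, producing the family $(2h-1)^3 + 4h(h-2) + 2h(2k+1) + 2$ for $0 \le k \le 2h-2$. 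The total count is $(2h-2)+(2h-1) = 4h-3$, which must match $\mathrm{type}(k\llbracket S \rrbracket) = \beta_3$, the last Betti number of the resolution.

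As an independent check (and an alternative route, should one wish to bypass the resolution's full degree data and use only $\beta_3 = 4h-3$), I would verify directly that each listed integer $f$ is a pseudo-Frobenius number: namely $f \notin S$, and $f + n_i \in S$ for $i = 1,2,3,4$, exhibiting explicit non-negative combinations of the $n_i$ by rewriting $f+n_i$ with the binomial relations $g_1 = x_1^{2h-1}-x_2^{2h}$, $g_2 = x_3x_4 - x_1x_2$, and the $A_1$- and $A_2'$-relations. Combined with $|\mathrm{PF}(S)| = \mathrm{type}(k\llbracket S \rrbracket) = 4h-3$ and the distinctness of the $4h-3$ listed values, this forces $\mathrm{PF}(S)$ to equal the claimed set. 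I expect the main obstacle to be twofold: establishing the non-membership $f \notin S$ cleanly --- equivalently, that each $f$ is a genuine maximal element of the relevant Apéry set and not a smaller gap --- and the careful bookkeeping needed to match the resolution's degree shifts to the two closed-form strands while confirming that the two families are disjoint and have no internal repetitions.
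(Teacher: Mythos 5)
Your route is sound and genuinely different from the paper's. The paper never touches the free resolution: it takes only the minimal generating set of $I_S$ from \cite{ranjana}, computes $I_S+\langle x_4\rangle$ (where $n_4=2h(2h-1)$ is the multiplicity), reads off a monomial $k$-basis of $k[S]/\langle x_4\rangle$ to obtain $\mathrm{Ap}(S,n_4)$, extracts the maximal elements of the Ap\'ery set with respect to $\preceq_S$, and subtracts $n_4$ via \cite[Proposition 2.20]{numerical}. Your canonical-module dictionary $\mathrm{PF}(S)=\{c_j-\sigma\}$ is the standard dual of that computation, and your predicted answer is exactly right: the two strands you describe, of lengths $2h-2$ and $2h-1$ with common differences $2h-1$ and $4h$ (note $n_3-n_1=n_2-n_4=2h-1$ and $n_3-n_2=n_1-n_4=4h$, so either attribution of the differences works), match the paper's two families term by term, with total $(2h-2)+(2h-1)=4h-3=\mathrm{type}(k\llbracket S\rrbracket)$. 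What each approach buys: yours is shorter \emph{if} \cite{ranjana} actually records the graded shifts of the last free module --- verify this, since a resolution presented only through Betti numbers and matrices would force you to extract the degrees yourself, which is comparable in effort to the Ap\'ery set computation. The paper's route gets the non-membership statements for free: a maximal Ap\'ery element $w$ satisfies $w-n_4\notin S$ by definition of $\mathrm{Ap}(S,n_4)$, which is precisely the step you correctly flag as the main obstacle in your fallback argument (direct verification of the $4h-3$ candidates combined with the count $\beta_3=4h-3$). Either way the endpoint is the same set, so I see no gap in principle, only the bookkeeping you have already identified.
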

\begin{proof}
From \cite{ranjana}, we have
\begin{align*}
		I_S = \Bigg\langle 
		\begin{array}{c}
		 x_1^{2h-1}-x_2^{2h},x_3x_4 - x_1x_2,
x_1^{\mu-1}x_3^{2h-\mu} - x_2^{2h-\mu}x_4^{\mu+1} \mid ~ 1 \leq \mu \leq 2h, \\[1mm] 
x_1^{\mu}x_4^{2h+1-\mu} - x_2^{2h - \mu}x_3^{\mu} \mid ~ 1 \leq \mu \leq 2h-2	
		\end{array}
		\Bigg\rangle.
	\end{align*}
Consequently, observe that 
\begin{align*}
		I_S + \langle x_4 \rangle = \Bigg\langle 
		\begin{array}{c}
		x_4, x_1x_2, x_1^{2h-1}, x_3^{2h-1}, x_2^{2h},x_1^{\mu-1}x_3^{2h-\mu} \mid 2 \leq \mu \leq 2h, \\[1mm] 
x_3^{\mu}x_2^{2h-\mu} \mid 1 \leq \mu \leq 2h-2
		\end{array}
		\Bigg\rangle.
	\end{align*}
	Hence, the image of 
	\[ \mathcal{B} = \left\lbrace 
	\begin{array}{c}
	\bigcup_{i=1}^{2h-3} \{x_1^ix_3^j \mid 1 \leq j \leq 2h-2-i \} \cup \bigcup_{i=1}^{2h-2} \{x_3^ix_2^j \mid 1 \leq j \leq 2h-1-i \} \\[1mm] 
	\cup \{x_1^j \mid 0 \leq j \leq 2h-2 \} 
	\cup \{x_2^j \mid 1 \leq j \leq 2h-1 \} \cup \{x_3^j \mid 1 \leq j \leq 2h-2 \} 
	\end{array}
	\right\rbrace \]
	gives a $k$-basis of $k[S]/\langle x_4 \rangle = k[x_1,\ldots,x_4]/(I_S+\langle x_4 \rangle).$ Since $\mathrm{Ap}(S,n_4) = \{ \deg u \mid u \in \mathcal{B} \},$ we get
	\begin{align*}
		&\mathrm{Ap}(S,n_4) = \left\lbrace
		\begin{array}{c}
			 \ \bigcup_{i=1}^{2h-3} \{i n_1 + j n_3 \mid 1 \leq j \leq 2h-2-i \}  \cup \bigcup_{i=1}^{2h-2} \{i n_3 + jn_2 \mid 1 \leq j \leq 2h-1-i \} \\[1mm]
			\cup \{a n_1,  b n_2,  c n_3 \mid 1 \leq j \leq 2h-2, \ 1 \leq b \leq 2h-1, \ 1 \leq c \leq 2h-2 \} \cup \{0\}
		\end{array}
		\right\rbrace.
	\end{align*} 
Let $\preceq_S$ denote the partial order on $\mathbb{N}$ where for all elements $a_1, a_2 \in \mathbb{N}$, $a_1 \preceq_S a_2$ if $a_2-a_1 \in S.$ Thus, we get that
	\begin{align*}
		&\mathrm{max}_{\preceq_S} (\mathrm{Ap}(S,n_4) = \left\lbrace
		\begin{array}{c}
			 \{ (2h-2)n_1, (2h-1)n_2 \} \cup \{ i n_1 + (2h-2-i)n_3 \mid 1 \leq i \leq 2h-3 \}  \\[1mm]
			\cup \{ i n_2 + (2h-1-i)n_3 \mid 1 \leq i \leq 2h-2 \}  
		\end{array} 
		\right\rbrace  .
	\end{align*}
Now, by \cite[Proposition 2.20]{numerical}, we conclude that
\begin{align*}
		&\mathrm{PF}(S) =  \left\lbrace
		\begin{array}{c}
			 \{ (2h-2)n_1 - n_4, (2h-1)n_2-n_4 \} \cup \{ i n_1 + (2h-2-i)n_3 -n_4 \mid 1 \leq i \leq 2h-3 \}  \\[1mm]
			\cup \{ i n_2 + (2h-1-i)n_3 -n_4 \mid 1 \leq i \leq 2h-2 \}  
		\end{array} 
		\right\rbrace  .
	\end{align*}
Now, one can easily write $\mathrm{PF}(S)$ in the required form.	
\end{proof}

%
%

\begin{example}\label{h=2}
\rm{Let $h = 2$, then $n_1 = 20$, $n_2 = 15$, $n_3 = 23$, $n_4 = 12$, and $S = \langle 12, 15, 20, 23 \rangle$. By above theorem $\mathrm{PF}(S) = \left\lbrace	28,31,33,41,49	\right\rbrace .$}
\end{example}

\begin{proposition}\label{minmaxbres}
Let $S = \langle n_1,n_2,n_3,n_4 \rangle$ be a numerical semigroup generated by $n_1,n_2,n_3,n_4$ as defined in Bresinsky's curves. Then $k\llbracket S \rrbracket$ never has maximal or minimal reduced type.
\end{proposition}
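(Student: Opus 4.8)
The plan is to reduce everything to the two numerical criteria already in hand: by Proposition \ref{maxred} the ring $k\llbracket S\rrbracket$ has maximal reduced type iff $\mathrm{min}\,\mathrm{PF}(S)\geq \mathrm{F}(S)-m(S)+1$, and by Proposition \ref{minred} it has minimal reduced type iff $\mathrm{max}\,\mathrm{PF}'(S) < \mathrm{F}(S)-m(S)+1$. So the whole argument comes down to locating three quantities inside $\mathrm{PF}(S)$ — its maximum (which is $\mathrm{F}(S)$), its minimum, and its second-largest element $\mathrm{max}\,\mathrm{PF}'(S)$ — and comparing them against $\mathrm{F}(S)-m(S)+1$. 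First I would record that, writing out $n_1=2h(2h+1)$, $n_2=(2h-1)(2h+1)$, $n_3=2h(2h+1)+(2h-1)$, $n_4=(2h-1)2h$ as $n_4=4h^2-2h < n_2=4h^2-1 < n_1=4h^2+2h < n_3=4h^2+4h-1$, the multiplicity is $m(S)=n_4=2h(2h-1)$.

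Next I would use the explicit description from Theorem \ref{pfbresinsky}. Setting the common base $C=(2h-1)^3+4h(h-2)$, the pseudo-Frobenius numbers split into the family $C+k(2h-1)+1$ for $0\leq k\leq 2h-3$ and the family $C+2h(2k+1)+2$ for $0\leq k\leq 2h-2$, each strictly increasing in $k$, so all extrema occur at the endpoints. Comparing the two top values $C+(2h-3)(2h-1)+1$ and $C+2h(4h-3)+2$ (a routine quadratic-in-$h$ comparison) identifies $\mathrm{F}(S)=C+8h^2-6h+2$, coming from the second family at $k=2h-2$; comparing the two bottom values $C+1$ and $C+2h+2$ gives $\mathrm{min}\,\mathrm{PF}(S)=C+1$, from the first family at $k=0$. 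With these in place, $\mathrm{F}(S)-m(S)+1 = C+4h^2-4h+3$.

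The one delicate point, and the step I expect to be the main obstacle, is pinning down $\mathrm{max}\,\mathrm{PF}'(S)$: after deleting $\mathrm{F}(S)$, the runner-up is contested between the second family's value at $k=2h-3$, namely $C+8h^2-10h+2$, and the first family's top value $C+4h^2-8h+4$, and one must verify the former is larger, which amounts to $2(2h+1)(h-1)>0$ for $h\geq 2$; this requires a little care because for small $h$ (e.g.\ $h=2$) the first family has only two terms, so I would check that the index ranges are nonempty and behave as claimed. Granting $\mathrm{max}\,\mathrm{PF}'(S)=C+8h^2-10h+2$, the two conclusions then follow by direct subtraction: for maximal reduced type, $\mathrm{min}\,\mathrm{PF}(S)-(\mathrm{F}(S)-m(S)+1) = -(4h^2-4h+2)<0$, so the condition of Proposition \ref{maxred} fails; for minimal reduced type, $\mathrm{max}\,\mathrm{PF}'(S)-(\mathrm{F}(S)-m(S)+1) = 4h^2-6h-1\geq 3>0$ for all $h\geq 2$, so the condition of Proposition \ref{minred} fails. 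Hence $k\llbracket S\rrbracket$ has neither maximal nor minimal reduced type, and a sanity check against Example \ref{h=2} (where $\mathrm{PF}(S)=\{28,31,33,41,49\}$, giving $\mathrm{F}(S)-m(S)+1=38$ with $\mathrm{min}\,\mathrm{PF}(S)=28<38$ and $\mathrm{max}\,\mathrm{PF}'(S)=41>38$) confirms the computation.
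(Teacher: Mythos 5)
Your proposal is correct and follows essentially the same route as the paper: read off $\mathrm{F}(S)$, $\min\mathrm{PF}(S)$, and $\max\mathrm{PF}'(S)$ from the explicit description in Theorem \ref{pfbresinsky}, note $m(S)=2h(2h-1)$, and test both criteria against $\mathrm{F}(S)-m(S)+1$. Your only addition is that you explicitly justify which family endpoints realize the maximum, minimum, and second-largest pseudo-Frobenius numbers (via the comparison $2(2h+1)(h-1)>0$), a verification the paper asserts without comment; the final inequalities agree with the paper's $4h^2-4h+2>0$ and $4h(1-h)+2h+1<0$.
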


\begin{proof}
From Theorem \ref{pfbresinsky}, note that $\mathrm{F}(S) = (2h-1)^3+ 4h(h-2)+ 2h(4h-3)+2$ and $\mathrm{min}~\mathrm{PF}(S) = (2h-1)^3+ 4h(h-2)+1$. Thus, we have
\begin{align*}
 \mathrm{F}(S) - \mathrm{min}~\mathrm{PF}(S)  & = 2h(4h-3) + 1 > 2h(2h-1) - 1. 
\end{align*}
Since, $m(S) = 2h(2h-1)$, we get $\mathrm{min}~\mathrm{PF}(S) < \mathrm{F}(S) - m(S) + 1$. Hence,  $k\llbracket S \rrbracket$ does not have maximal reduced type. Now, note from Theorem \ref{pfbresinsky} that $\mathrm{F}(S) - \mathrm{max}~\mathrm{PF}'(S) =  4h$. Thus,
\begin{align*}
 \mathrm{F}(S) - m(S) + 1 - \mathrm{max}~\mathrm{PF}'(S)  & = 4h(1-h)+ 2h + 1. 
\end{align*}
Since $h \geq 2$, we get $\mathrm{max}~\mathrm{PF}'(S) >  \mathrm{F}(S) - m(S) + 1$. Hence,  $k\llbracket S \rrbracket$ does not have minimal reduced type.
\end{proof}

\begin{example}
\rm{Let $S$ be as in Example \ref{h=2}. Since $\mathrm{PF}(S) = \{28,31,33,41,49\}$, we get $\mathrm{type}(k\llbracket  S \rrbracket) = 5$ and the reduced type $s(k\llbracket  S \rrbracket) = 2$, . Thus, $k\llbracket S \rrbracket$ has neither maximal nor minimal reduced type.}
\end{example}

\section{Gluing of numerical semigroups}

In this section, we study the extremal behavior of the reduced type of semigroup rings associated with the nice extension of a numerical semigroup. In \cite[Theorem 3.34]{mukundan}, the authors prove that if a numerical semigroup $S$ is a gluing of numerical semigroups $S_1$ and $S_2$ then $s(k\llbracket S \rrbracket) \leq s(k\llbracket S_1 \rrbracket) s(k\llbracket S_1 \rrbracket)$, and consequently prove that if $k\llbracket S_1 \rrbracket$ and $k\llbracket S_2 \rrbracket$ have minimal reduced type so has $k\llbracket S \rrbracket$. However, such a nice analog for maximal reduced type does not hold. In this section, we give a sufficient condition for this analog for a special kind of gluing of numerical semigroups. Let us first recall the definition of gluing of numerical semigroups.

\begin{definition}  \label{NSgluing}
	Let $S_1$ and $S_2$ be two numerical semigroups minimally generated by $n_1,\ldots,n_r$ and $n_{r+1},\ldots,n_e$ respectively. Let $\mu \in S_1 \setminus \{ n_1,\ldots,n_r \}$ and $\lambda \in S_2 \setminus \{ n_{r+1}, \ldots, n_e \}$ be such that $\gcd(\lambda,\mu)=1.$ We say that $S = \langle \lambda n_1, \ldots, \lambda n_r, \mu n_{r+1}, \ldots, \mu n_{e} \rangle$ is a gluing of $S_1$ and $S_2,$ denoted by $S_1+_{\lambda \mu} S_2$.
\end{definition}

\begin{coro}[{\cite[Proposition 6.6]{nari}}]\label{pfgluing}
	If $S = \langle \mu S_1, \lambda S_2 \rangle$ (as in Definition {\rm \ref{NSgluing}}), then
	\begin{enumerate}
\item[(i)]	$ \mathrm{PF}(S) = \{ \lambda f + \mu g + \lambda \mu \mid f \in \mathrm{PF}(S_1), g \in \mathrm{PF}(S_2) \}.$
	
	\item[(ii)] $\mathrm{type}(k[S]) = \mathrm{type}(k[S_1])\mathrm{type}(k[S_2])$.
\item[(iii)] $\mathrm{F}(S) = \mu \mathrm{F}(S_1) + \lambda \mathrm{F}(S_1) + \lambda \mu$.
	\end{enumerate}
\end{coro}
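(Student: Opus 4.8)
The plan is to reduce all three statements to a single structural fact: the Apéry set of $S$ with respect to the element $\lambda\mu$ decomposes as a ``direct sum'' of the Apéry sets of $S_1$ and $S_2$. First I would record that $\lambda\mu\in S$, since $\mu\in S_1$ gives $\lambda\mu\in\lambda S_1\subseteq S$ while $\lambda\in S_2$ gives $\lambda\mu\in\mu S_2\subseteq S$. Writing $S=\lambda S_1+\mu S_2$ as a sumset of submonoids, every element of $S$ admits a representation $\lambda a+\mu b$ with $a\in S_1$, $b\in S_2$. The central claim is
\[
\mathrm{Ap}(S,\lambda\mu)=\lambda\,\mathrm{Ap}(S_1,\mu)+\mu\,\mathrm{Ap}(S_2,\lambda),
\]
together with \emph{uniqueness}: if $\lambda a+\mu b=\lambda a'+\mu b'$ with $a,a'\in\mathrm{Ap}(S_1,\mu)$ and $b,b'\in\mathrm{Ap}(S_2,\lambda)$, then $a=a'$ and $b=b'$.

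The uniqueness and the inclusion ``$\subseteq$'' are where the gluing hypotheses $\gcd(\lambda,\mu)=1$, $\mu\in S_1$, $\lambda\in S_2$ do the work. For uniqueness, from $\lambda(a-a')=\mu(b'-b)$ and $\gcd(\lambda,\mu)=1$ I would conclude $\mu\mid(a-a')$; since $a,a'$ are the least elements of $S_1$ in their common residue class modulo $\mu$, they coincide, and then $b=b'$ follows. For the inclusion $\lambda\,\mathrm{Ap}(S_1,\mu)+\mu\,\mathrm{Ap}(S_2,\lambda)\subseteq\mathrm{Ap}(S,\lambda\mu)$, given such $a,b$ I would show $\lambda a+\mu b-\lambda\mu\notin S$ by assuming a representation $\lambda a'+\mu b'$, deducing $\mu\mid(a-a')$ as above, writing $a'=a+k\mu$ with $k\ge 0$ by Apéry-minimality, and substituting to force $b-\lambda=b'+k\lambda\in S_2$, contradicting $b\in\mathrm{Ap}(S_2,\lambda)$. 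Equality then follows by a cardinality count: $|\mathrm{Ap}(S,\lambda\mu)|=\lambda\mu$, while uniqueness gives $|\lambda\,\mathrm{Ap}(S_1,\mu)+\mu\,\mathrm{Ap}(S_2,\lambda)|=|\mathrm{Ap}(S_1,\mu)|\cdot|\mathrm{Ap}(S_2,\lambda)|=\mu\cdot\lambda$.

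Next I would show that the order $\preceq_S$ restricted to $\mathrm{Ap}(S,\lambda\mu)$ is the product of $\preceq_{S_1}$ on $\mathrm{Ap}(S_1,\mu)$ and $\preceq_{S_2}$ on $\mathrm{Ap}(S_2,\lambda)$; that is, $\lambda a+\mu b\preceq_S\lambda a'+\mu b'$ if and only if $a\preceq_{S_1}a'$ and $b\preceq_{S_2}b'$. One direction is immediate, since $a'-a\in S_1$ and $b'-b\in S_2$ give $(\lambda a'+\mu b')-(\lambda a+\mu b)=\lambda(a'-a)+\mu(b'-b)\in S$. The converse again runs through the $\gcd$ argument and Apéry-minimality to split a representation of the difference into its $S_1$- and $S_2$-parts; this is the step I expect to be the main obstacle, since it needs the full force of unique representation rather than a mere counting argument. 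With the order factorization in hand, the $\preceq_S$-maximal elements of $\mathrm{Ap}(S,\lambda\mu)$ are exactly $\{\lambda a+\mu b : a\in\mathrm{Max}_{\preceq_{S_1}}\mathrm{Ap}(S_1,\mu),\ b\in\mathrm{Max}_{\preceq_{S_2}}\mathrm{Ap}(S_2,\lambda)\}$, because maximal elements of a product poset are precisely the products of maximal elements.

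Finally I would invoke \cite[Proposition 2.20]{numerical}, which identifies $\mathrm{PF}(S)$ with $\mathrm{Max}_{\preceq_S}\mathrm{Ap}(S,\lambda\mu)$ shifted by $-\lambda\mu$, and likewise for $S_1,S_2$. Writing a maximal Apéry element of $S$ as $\lambda(f+\mu)+\mu(g+\lambda)$ with $f\in\mathrm{PF}(S_1)$ and $g\in\mathrm{PF}(S_2)$ and subtracting $\lambda\mu$ yields $\lambda f+\mu g+\lambda\mu$, which gives (i). Part (ii) is then immediate, as the map $(f,g)\mapsto\lambda f+\mu g+\lambda\mu$ is injective by uniqueness of representation, so $\mathrm{type}(k[S])=|\mathrm{PF}(S)|=|\mathrm{PF}(S_1)|\cdot|\mathrm{PF}(S_2)|=\mathrm{type}(k[S_1])\,\mathrm{type}(k[S_2])$. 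For part (iii), since $\mathrm{F}(S_i)=\max\mathrm{PF}(S_i)$ and $\lambda f+\mu g+\lambda\mu$ is strictly increasing in both $f$ and $g$, the maximum over $\mathrm{PF}(S)$ is attained at $f=\mathrm{F}(S_1)$, $g=\mathrm{F}(S_2)$, giving $\mathrm{F}(S)=\lambda\mathrm{F}(S_1)+\mu\mathrm{F}(S_2)+\lambda\mu$.
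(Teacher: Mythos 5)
Your proof is correct, and since the paper offers no proof of this statement (it is quoted directly from Nari's Proposition 6.6), your argument -- the decomposition $\mathrm{Ap}(S,\lambda\mu)=\lambda\,\mathrm{Ap}(S_1,\mu)+\mu\,\mathrm{Ap}(S_2,\lambda)$ with unique representation, the factorization of $\preceq_S$ as a product order, and the passage to $\mathrm{PF}$ via \cite[Proposition 2.20]{numerical} -- is exactly the standard route taken in the cited source. The one step you flag as the main obstacle (the converse direction of the order factorization) does go through: writing the difference as $\lambda c+\mu d$ and using $\gcd(\lambda,\mu)=1$ forces $a'=a+c+m\mu$ and $b'=b+d-m\lambda$ for some integer $m$, and Ap\'ery-minimality of $a'$ rules out $m\ge 1$ while that of $b'$ rules out $m\le -1$, so $m=0$. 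Note also that your computation in (iii) gives $\mathrm{F}(S)=\lambda\mathrm{F}(S_1)+\mu\mathrm{F}(S_2)+\lambda\mu$, which is the formula consistent with part (i); the version printed in the statement, $\mu\mathrm{F}(S_1)+\lambda\mathrm{F}(S_1)+\lambda\mu$, is a typo, and yours is the intended one.
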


\begin{proposition}\label{gluingmaxredtype}
Let $k\llbracket S_1 \rrbracket$ and $k\llbracket S_2 \rrbracket$ be numerical semigroup rings with maximal reduced type, and $S = S_1 +_{\lambda \mu}S_2$ be a gluing of $S_1$ and $S_2$. If $\lambda + \mu > \mathrm{max}\{\lambda n_1, \mu n_{r+1}\}$, then $k\llbracket S \rrbracket$ has maximal reduced type.
\end{proposition}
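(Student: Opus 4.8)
The plan is to reduce the statement to the numerical criterion of Proposition~\ref{maxred}: it suffices to show that $\mathrm{min}~\mathrm{PF}(S) \geq \mathrm{F}(S) - m(S) + 1$. First I would assemble the three ingredients furnished by Corollary~\ref{pfgluing}. Since $\lambda,\mu > 0$, minimizing $\lambda f + \mu g + \lambda\mu$ over $f \in \mathrm{PF}(S_1)$ and $g \in \mathrm{PF}(S_2)$ is achieved by minimizing $f$ and $g$ separately, so that
\begin{equation*}
\mathrm{min}~\mathrm{PF}(S) = \lambda\,\mathrm{min}~\mathrm{PF}(S_1) + \mu\,\mathrm{min}~\mathrm{PF}(S_2) + \lambda\mu,
\end{equation*}
while $\mathrm{F}(S) = \lambda\,\mathrm{F}(S_1) + \mu\,\mathrm{F}(S_2) + \lambda\mu$ is the value attained at $f=\mathrm{F}(S_1)$, $g=\mathrm{F}(S_2)$. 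Moreover, since the displayed generating set of $S$ is its minimal generating set, the multiplicity is $m(S) = \mathrm{min}\{\lambda n_1,\mu n_{r+1}\}$, where $n_1 = m(S_1)$ and $n_{r+1} = m(S_2)$ are the multiplicities of the two glued semigroups.

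Substituting these expressions and cancelling the common term $\lambda\mu$, the desired inequality $\mathrm{min}~\mathrm{PF}(S) \geq \mathrm{F}(S) - m(S) + 1$ becomes equivalent to
\begin{equation*}
\lambda\bigl(\mathrm{F}(S_1) - \mathrm{min}~\mathrm{PF}(S_1)\bigr) + \mu\bigl(\mathrm{F}(S_2) - \mathrm{min}~\mathrm{PF}(S_2)\bigr) \leq \mathrm{min}\{\lambda n_1, \mu n_{r+1}\} - 1.
\end{equation*}
This is where the hypothesis on $S_1$ and $S_2$ enters: because $k\llbracket S_1\rrbracket$ and $k\llbracket S_2\rrbracket$ have maximal reduced type, Proposition~\ref{maxred} gives $\mathrm{F}(S_i) - \mathrm{min}~\mathrm{PF}(S_i) \leq m(S_i) - 1$ for $i = 1,2$, so the left-hand side above is bounded by $\lambda(n_1 - 1) + \mu(n_{r+1} - 1) = \lambda n_1 + \mu n_{r+1} - (\lambda + \mu)$.

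It therefore suffices to check that $\lambda n_1 + \mu n_{r+1} - (\lambda+\mu) \leq \mathrm{min}\{\lambda n_1,\mu n_{r+1}\} - 1$, which rearranges to $\mathrm{max}\{\lambda n_1,\mu n_{r+1}\} \leq \lambda + \mu - 1$; since all quantities are integers, this is exactly the hypothesis $\lambda + \mu > \mathrm{max}\{\lambda n_1,\mu n_{r+1}\}$, and the argument closes. I expect the only genuine subtlety to lie in the two bookkeeping claims used at the outset, namely that $\mathrm{min}~\mathrm{PF}(S)$ splits as the sum of the minima of $\mathrm{PF}(S_1)$ and $\mathrm{PF}(S_2)$ (valid because $\lambda,\mu>0$) and that the glued generating set is minimal, so that $m(S) = \mathrm{min}\{\lambda n_1,\mu n_{r+1}\}$ with $n_1,n_{r+1}$ the multiplicities of $S_1,S_2$ (a standard property of gluings). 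Once these are in place, everything reduces to the short manipulation above, into which the hypothesis slots precisely.
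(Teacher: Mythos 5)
Your proof is correct and follows essentially the same route as the paper: both reduce the claim to the criterion $\mathrm{min}~\mathrm{PF}(S) \geq \mathrm{F}(S)-m(S)+1$ via the gluing formulas of Corollary~\ref{pfgluing} together with the maximal reduced type hypothesis applied to $S_1$ and $S_2$, and then close with the inequality $\lambda+\mu > \mathrm{max}\{\lambda n_1,\mu n_{r+1}\}$. The only cosmetic difference is that you treat the multiplicity uniformly as $\mathrm{min}\{\lambda n_1,\mu n_{r+1}\}$, whereas the paper splits into the two cases $\lambda n_1 < \mu n_{r+1}$ and $\lambda n_1 > \mu n_{r+1}$.
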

\begin{proof}
Since $k\llbracket S_1 \rrbracket$ and $k\llbracket S_2 \rrbracket$ have maximal reduced type, we have $\mathrm{min}~\mathrm{PF}(S_1) \geq \mathrm{F}(S_1)-n_1+1$ and $\mathrm{min}~\mathrm{PF}(S_2) \geq \mathrm{F}(S_2)-n_{r+1}+1$. Therefore, we get
$$\lambda \mathrm{min}~\mathrm{PF}(S_1) + \mu \mathrm{min}~\mathrm{PF}(S_2)\geq \lambda \mathrm{F}(S_1)-\lambda n_1+\lambda + \mu \mathrm{F}(S_2)-\mu n_{r+1}+\mu.$$
Now, suppose $\lambda n_1 < \mu n_{r+1}$, then $m(S) = \lambda n_1$, and we get 
$$\lambda \mathrm{min}~\mathrm{PF}(S_1) + \mu \mathrm{min}~\mathrm{PF}(S_2) + \lambda \mu \geq \lambda \mathrm{F}(S_1) + \mu \mathrm{F}(S_2)+ \lambda \mu - m(S)+\lambda +\mu -\mu n_{r+1}.$$ Since $\lambda + \mu > \mathrm{max}\{\lambda n_1, \mu n_{r+1}\}$, by Corollary \ref{pfgluing}, we get $\mathrm{min}~\mathrm{PF}(S) \geq \mathrm{F}(S)-m(S)+1$. Thus,  we get that $k\llbracket S \rrbracket$ has maximal reduced type. Now, if $\lambda n_1 > \mu n_{r+1}$, then $m(S) = \mu n_{r+1}$. Thus, we get
$$\lambda \mathrm{min}~\mathrm{PF}(S_1) + \mu \mathrm{min}~\mathrm{PF}(S_2) + \lambda \mu \geq \lambda \mathrm{F}(S_1) + \mu \mathrm{F}(S_2)+ \lambda \mu - m(S)+\lambda +\mu -\lambda n_1.$$
Since $\lambda + \mu > \mathrm{max}\{\lambda n_1, \mu n_{r+1}\}$,  we get that $k\llbracket S \rrbracket$ has maximal reduced type.
\end{proof}

Observe that the converse of the above proposition is not true, i.e., if  $\lambda + \mu \leq \mathrm{max}\{\lambda n_1, \mu n_{r+1}\}$ and $k\llbracket S_1 \rrbracket$, $k\llbracket S_2 \rrbracket$ have maximal reduced type then $k\llbracket S \rrbracket$ may have maximal reduced type.

\begin{example}
\rm{Let $S = \langle 35, 42, 49, 26 \rangle$. Note that $S$ is a gluing of $S_1 = \langle 5,6,7\rangle$ and $S_2 = \mathbb{N}$ with $\lambda = 7$ and $\mu = 26$. Here $\lambda + \mu = 33 < \mathrm{max}~\{26,35\}$ and $k\llbracket S_1 \rrbracket$, $k\llbracket S_2 \rrbracket$ have maximal reduced type. Note that $\mathrm{PF}(S) = \{212, 219\}$. Therefore, we get $\mathrm{F}(S)-m(S)+1 = 194 < 212$. Thus, $k\llbracket S \rrbracket$ has maximal reduced type.}
\end{example}

\begin{remark}
\rm{Note that, if $\lambda + \mu > \mathrm{max}\{\lambda n_1, \mu n_{r+1}\}$ then either $S_1$ or $S_2$ has to be $\mathbb{N}$. Because, if $\lambda+\mu > \lambda n_1$ and $\lambda+\mu > \mu n_{r+1}$ then we get $\lambda > (n_1'-1)\mu$ and $\mu > (n_1-1)\lambda$. This implies that $\mu > (n_1-1)(n_{r+1}-1)\mu$. This is true only if $n_1=1$ or $n_{r+1} = 1$. However, this condition gives a bigger class of numerical semigroups than the nice extension, first defined in \cite{arslan} as follows.}
\end{remark}

\begin{definition}[{\cite[Definition 3.1]{arslan}}]
Let $S = \langle n_1, \ldots, n_e \rangle$ be a numerical semigroup. A numerical semigroup $S'$ is called a nice extension of $S$ if there is an element $n_{e+1} = a_1n_1 + \ldots + a_en_e \in S \setminus \{n_1, \ldots, n_e\}$ and an integer $p \leq a_1 + \ldots + a_e$ with $\mathrm{gcd}(p,n_{e+1}) = 1$ such that $S' = \langle pn_1, \ldots, pn_k, n_{e+1}\rangle$. 
\end{definition}

\begin{coro}\label{maxniceext}
Let $S = \langle n_1, \ldots, n_e \rangle$ be a numerical semigroup minimally generated by $n_1, \ldots, n_e$ and $S'$ be a nice extension of $S$. Then $k\llbracket S \rrbracket$ has maximal reduced type if and only if $k\llbracket S' \rrbracket$ has maximal reduced type.
\end{coro}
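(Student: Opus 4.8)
The plan is to recognize a nice extension as a gluing with $\mathbb{N}$ and then read off everything from Corollary \ref{pfgluing}. Concretely, if $S' = \langle pn_1, \ldots, pn_e, n_{e+1}\rangle$ is a nice extension of $S = \langle n_1, \ldots, n_e\rangle$ with $n_{e+1} = \sum_i a_i n_i$ and $p \le \sum_i a_i$, then $S' = S +_{\lambda\mu} \mathbb{N}$ with $S_1 = S$, $S_2 = \mathbb{N} = \langle 1\rangle$, $\lambda = p$, and $\mu = n_{e+1}$; here $\mu \in S \setminus \{n_1, \ldots, n_e\}$ and $\gcd(\lambda,\mu) = \gcd(p, n_{e+1}) = 1$, exactly as required in Definition \ref{NSgluing}. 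The degenerate case $p = 1$ forces $S' = S$ and is trivial, so I may assume $p \ge 2$.

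First I would record the gluing data. Since $\mathrm{PF}(\mathbb{N}) = \{-1\}$, Corollary \ref{pfgluing}(i) yields
\[
\mathrm{PF}(S') = \{\, pf + n_{e+1}(p-1) \mid f \in \mathrm{PF}(S) \,\}.
\]
Because the affine map $f \mapsto pf + n_{e+1}(p-1)$ is strictly increasing, it sends the minimum to the minimum and the maximum to the maximum, giving $\min\mathrm{PF}(S') = p\min\mathrm{PF}(S) + n_{e+1}(p-1)$ and $\mathrm{F}(S') = p\,\mathrm{F}(S) + n_{e+1}(p-1)$. I then need the multiplicity: since $n_{e+1} = \sum_i a_i n_i \ge \bigl(\sum_i a_i\bigr) n_1 \ge p n_1$ and $n_1 = m(S)$ is the smallest generator of $S$, the smallest generator of $S'$ is $pn_1$, so $m(S') = p\,m(S)$.

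Next I would feed these into the maximal-reduced-type criterion of Proposition \ref{maxred}. Writing $A := \min\mathrm{PF}(S)$ and $C := \mathrm{F}(S) - m(S)$, the criterion for $k\llbracket S\rrbracket$ reads $A \ge C+1$, while substituting the three formulas above collapses the criterion for $k\llbracket S'\rrbracket$ (the common additive shift $n_{e+1}(p-1)$ cancels) to $pA \ge pC + 1$, i.e.\ $p(A-C) \ge 1$. These two inequalities are equivalent because $A - C$ is an integer: if $A - C \ge 1$ then $p(A-C) \ge p \ge 1$, and conversely $p(A-C) \ge 1$ gives $A - C \ge 1/p > 0$, whence $A - C \ge 1$. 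This equivalence is precisely the asserted biconditional.

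The only genuinely new content is the reverse implication ($k\llbracket S'\rrbracket$ maximal $\Rightarrow$ $k\llbracket S\rrbracket$ maximal); the forward implication also drops out of Proposition \ref{gluingmaxredtype} directly, since $k\llbracket\mathbb{N}\rrbracket$ trivially has maximal reduced type and the hypothesis $\lambda + \mu > \max\{\lambda n_1,\, \mu n_{r+1}\}$ is automatic here (indeed $p + n_{e+1} \ge p + pn_1 > pn_1$, and $p + n_{e+1} > n_{e+1}$). The main obstacle is therefore not any single sharp estimate but making the integrality step airtight: the descaling from $S'$ back to $S$ succeeds only because $\min\mathrm{PF}$, $\mathrm{F}$, and $m$ all scale by the same factor $p$ up to that common shift, so the slack of size at most $(p-1)/p < 1$ is too small to separate two integers and the integer inequality must hold on the nose.
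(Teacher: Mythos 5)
Your proof is correct, but it is routed differently from the paper's. The paper splits the two implications: the forward direction ($k\llbracket S\rrbracket$ maximal $\Rightarrow$ $k\llbracket S'\rrbracket$ maximal) is obtained by checking the hypothesis $\lambda+\mu>\max\{\lambda n_1,\mu n_{r+1}\}$ of Proposition \ref{gluingmaxredtype} for the gluing $S'=S+_{\lambda\mu}\mathbb{N}$, while the reverse direction is delegated entirely to an external result (\cite[Theorem 3.40]{mukundan}). You instead prove both implications at once by a single self-contained computation: using Corollary \ref{pfgluing} with $\mathrm{PF}(\mathbb{N})=\{-1\}$ to get the exact identity $\mathrm{PF}(S')=\{pf+n_{e+1}(p-1)\mid f\in\mathrm{PF}(S)\}$, observing that $\min\mathrm{PF}$, $\mathrm{F}$, and $m$ all transform as $x\mapsto px+\text{(common shift)}$ (the multiplicity step correctly uses $n_{e+1}=\sum a_in_i\ge pn_1$, which is exactly where the nice-extension hypothesis $p\le\sum a_i$ enters), and then descaling the criterion of Proposition \ref{maxred} via the integrality of $\min\mathrm{PF}(S)-\mathrm{F}(S)+m(S)$. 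What your approach buys is that the corollary no longer depends on the external citation, and it makes transparent why the equivalence is exact rather than one-sided: the slack $(p-1)/p<1$ cannot separate two integers. What the paper's approach buys is brevity and reuse of the already-established Proposition \ref{gluingmaxredtype}. Your handling of the degenerate case $p=1$ (where $S'=S$) and the verification that $p$ and $n_{e+1}$ satisfy the gluing requirements are both correct, so I see no gap.
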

\begin{proof}
Suppose $k\llbracket S \rrbracket$ has maximal reduced type. Let $S' = \langle pn_1, \ldots, pn_k, n_{e+1}\rangle$ be a nice extension of $S$. Taking $\lambda = p$, $\mu = n_{e+1}$, we get $S' = S +_{\lambda \mu} \mathbb{N}$. Since $\lambda + \mu = p + n_{e+1} > \mathrm{max}\{pn_1, n_{e+1}\} = n_{e+1}$, $k\llbracket S' \rrbracket$ has maximal reduced type follows from Proposition \ref{gluingmaxredtype}. Now, suppose $k\llbracket S' \rrbracket$ has maximal reduced type. Since $S' = S +_{\lambda \mu} \mathbb{N}$, $S$ has maximal reduced type follows from \cite[Theorem 3.40]{mukundan}.
\end{proof}

\begin{example}
\rm{Let $S = \langle 25, 30, 35, 23 \rangle$. Note that $S$ is a gluing of $S_1 = \langle 5,6,7\rangle$ and $S_2 = \mathbb{N}$ with $\lambda = 5$ and $\mu = 23$. Now $\lambda + \mu = 28 > \mathrm{max}~\{23,25\}$ and $k\llbracket S_1 \rrbracket$ has maximal reduced type. Thus, by Proposition \ref{gluingmaxredtype}, $k\llbracket S \rrbracket$ has maximal reduced type. One can easily observe that $S$ is not a nice extension of $S_1$.}
\end{example}

\section{The duplication of a numerical semigroup}

Let $S$ be a numerical semigroup and $E \subseteq S$ be an ideal of $S$, i.e., $S + E \subseteq E$. For an odd integer $d \in S$ and an ideal $E \subseteq S$, the  numerical duplication (see \cite{d'anna}) of $S$ with respect to $E$ and $d$ is defined as $$ S \bowtie^d E = 2 \cdot S \cup (2\cdot E + d),$$  
\noindent
where $2 \cdot S = \{2s \mid s \in S\}$ and $2 \cdot E + d = \{2e + d \mid e \in E\}$.  
Observe that the set $S \bowtie^d E$ is a numerical semigroup. Also, note that $S$ is an ideal of $S$. We set $E = S$ and call the set $S \bowtie^d S$ a self-duplication of $S$ with respect to $d$.

\begin{remark}
\rm{For an ideal $E \subseteq S$, $\tilde{E}:= E\cup \{0\}$ is also a numerical semigroup. Because, since $S+E \subseteq E$, we have $x+y \in E$ for all $x,y \in E$. Let $m$ be the smallest element of $E$. Note that $m + \mathrm{F}(S)+1+n \in E$ for all $n \in \mathbb{N}$. Thus $\mathbb{N} \setminus \tilde{E}$ is finite.}
\end{remark}

In the following theorem, we give a description of pseudo-Frobenius elements of duplication of a numerical semigroup.

\begin{theorem}\label{pfduplication}
 Let $E$ be an ideal of a numerical semigroup $S$, and $S \bowtie^d E$ be a duplication of $S$. Then we have the following:
\begin{enumerate}
\item[(i)] If $E \neq S,S^* (S^* := S \setminus \{0\})$, then $ \mathrm{PF}(S \bowtie^d E) = \Delta_1 \cup \Delta_2 $, where
\[ \Delta_1 = \{2f \mid f \in \mathrm{PF}(S) \cap \mathrm{PF}(\tilde{E})\}, \]
and
\[ \Delta_2 = \{2f + d \mid f \in \mathrm{PF}(\tilde{E}) ~\text{and}~ f+s \in E ~\text{for}~ s \in S\setminus \tilde{E}  \}.\]

\item[(ii)] If $E = S^*$, then 
\[\mathrm{PF}(S \bowtie^d E) = \{d,2f,2f + d \mid f \in \mathrm{PF}(S)\}. \]

\item[(iii)] If $E = S$, then 
\[\mathrm{PF}(S \bowtie^d E) = \{2f + d \mid f \in \mathrm{PF}(S)\}. \]
\end{enumerate} 
Moreover, we have 
\begin{itemize}
\item[(i)] If $E \neq S,S^*$, the Cohen-Macaulay type of $k\llbracket S \bowtie^d E \rrbracket$ is $\vert \Delta_1 \cup \Delta_2 \vert.$
\item[(ii)] If $E = S^*$, the Cohen-Macaulay type of $k\llbracket S \bowtie^d E \rrbracket$ is $2 \mathrm{type} (k\llbracket S \rrbracket) + 1.$
\item[(iii)] If $E = S$, the Cohen-Macaulay type of $k\llbracket S \bowtie^d E \rrbracket$ is equal to the Cohen-Macaulay type of $k\llbracket S \rrbracket.$
\end{itemize}

\end{theorem}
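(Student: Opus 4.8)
The plan is to compute the Apéry set of $S \bowtie^d E$ with respect to a convenient element and then read off the pseudo-Frobenius numbers via the standard characterization (the $\preceq$-maximal elements of an Apéry set, shifted). The cleanest choice of element is $2m(S)$, which lies in $2\cdot S \subseteq S\bowtie^d E$; but since the structure of $S\bowtie^d E$ splits naturally into the ``even'' part $2\cdot S$ and the ``odd'' part $2\cdot E + d$, I would instead analyze membership directly. The key structural fact I would establish first is: for $x \in \mathbb Z \setminus (S\bowtie^d E)$, exactly one of two cases occurs. If $x$ is even, write $x = 2y$; then $x \notin S\bowtie^d E$ forces $y \notin S$, and the condition ``$x + w \in S\bowtie^d E$ for all nonzero $w \in S\bowtie^d E$'' must be checked against both even generators $2s$ and odd generators $2e+d$. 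If $x$ is odd, write $x = 2y + d$ (possible since $d$ is odd); then $x \notin S\bowtie^d E$ forces $y \notin E$, i.e. $y \notin \tilde E$ or $y = 0$ with $0 \notin E$.

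\textbf{Reduction to conditions on $S$ and $\tilde E$.} The heart of the argument is to translate the pseudo-Frobenius condition $x + w \in S\bowtie^d E$ for all $w \in (S\bowtie^d E)^*$ into simultaneous conditions on the semigroups $S$ and $\tilde E$. I would split the test elements $w$ into even ones ($w = 2s$, $s \in S^*$) and odd ones ($w = 2e + d$, $e \in E$). For an even candidate $x = 2y$: the even tests give $2(y+s) \in S\bowtie^d E$, which (by parity) means $y + s \in S$ for all $s \in S^*$, i.e. $y \in \mathrm{PF}(S)$ (using $y \notin S$); the odd tests give $2(y+e) + d \in S\bowtie^d E$, which means $y + e \in E$ for all $e \in E$, i.e. $y \in \mathrm{PF}(\tilde E)$ (again using $y \notin \tilde E$, which follows from $y \notin S \supseteq E$). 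This produces exactly $\Delta_1$. For an odd candidate $x = 2y + d$ with $y \notin \tilde E$: the odd tests give $y + e \in S$ for all $e \in E$, and the even tests give $y + s \in E$ for all $s \in S^*$; careful bookkeeping (and the observation that $S \supseteq E$ makes several conditions automatic) collapses this to $y \in \mathrm{PF}(\tilde E)$ together with the extra requirement $y + s \in E$ for all $s \in S \setminus \tilde E$, which is precisely the defining condition of $\Delta_2$.

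\textbf{The degenerate cases.} Parts (ii) and (iii) are the boundary situations where the generic argument needs modification. For $E = S$, every element of $S\bowtie^d E = 2\cdot S \cup (2\cdot S + d)$ is accounted for symmetrically; the even candidates disappear because the condition $y \in \mathrm{PF}(S) \cap \mathrm{PF}(\tilde E)$ with $\tilde E = S$ is vacuous in the wrong direction (one checks $2f$ fails to be pseudo-Frobenius since $2f + d$ need not land in the semigroup), leaving only $\{2f + d : f \in \mathrm{PF}(S)\}$. For $E = S^*$, the element $d$ itself becomes pseudo-Frobenius — this is the source of the extra $+1$ in the type — because $d \notin S\bowtie^d E$ (as $0 \notin E = S^*$) yet $d + w \in S\bowtie^d E$ for every generator $w$; combined with the even and odd families this yields $\{d\} \cup \{2f, 2f+d : f \in \mathrm{PF}(S)\}$.

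\textbf{The main obstacle} I anticipate is the even-candidate analysis in case (i), specifically verifying that the odd tests $w = 2e+d$ contribute no condition beyond $y \in \mathrm{PF}(\tilde E)$ and do not spuriously exclude elements — one must confirm that $y + e \in E$ (not merely $\in S$) for all $e \in E$ is genuinely equivalent to $y \in \mathrm{PF}(\tilde E)$, which uses that $\tilde E = E \cup \{0\}$ is itself a numerical semigroup (established in the preceding remark) and that $y \notin E$. The asymmetry between the roles of $S$ and $\tilde E$ in the even versus odd candidates is where sign and parity errors are most likely, so I would state the membership dictionary ``$2z \in S\bowtie^d E \iff z \in S$'' and ``$2z + d \in S\bowtie^d E \iff z \in E$'' as an explicit lemma at the outset and invoke it uniformly. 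The type formulas then follow immediately by counting, with the only nontrivial count being $|\Delta_1 \cup \Delta_2|$ in case (i), which stays as written since the two families are disjoint by parity.
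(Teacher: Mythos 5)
Your proposal is correct and follows essentially the same route as the paper: a direct parity analysis based on the membership dictionary $2z\in S\bowtie^d E\iff z\in S$ and $2z+d\in S\bowtie^d E\iff z\in E$, splitting both the candidates and the test elements into even and odd, with the degenerate cases $E=S$ (where $e=0$ forces $2f+d$ into the semigroup and kills the even candidates) and $E=S^*$ (where $f=0$ survives and contributes $d$) handled exactly as in the paper's proof. The only quibble is a harmless bookkeeping slip in the odd-candidate analysis, where the odd tests actually yield $y+e+d\in S$ rather than $y+e\in S$; this condition is redundant given $y+s\in E$ for $s\in S^*$ and $S+E\subseteq E$, so the conclusion is unaffected.
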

\begin{proof}
(i) Let $f \in \mathrm{PF}(S)$, then clearly $2f \notin S \bowtie^d E$. Now, let $\mathfrak{f} \in \Delta_1$ and $0 \neq s \in S$, then we have
$\mathfrak{f} + 2s = 2(f+s)$ for some $f \in \mathrm{PF}(S) \cap \mathrm{PF}(\tilde{E})$. Since $f \in \mathrm{PF}(S)$, we get $\mathfrak{f} + 2s = 2s'$ for some $s' \in S$. For $e \in E$, we have   
$\mathfrak{f} + 2e + d = 2(f+e)+d$ for some $ f \in \mathrm{PF}(S) \cap \mathrm{PF}(\tilde{E})$. Since $f \in \mathrm{PF}(\tilde{E})$, this implies $f + e \in E$. Therefore, we get $\mathfrak{f} + 2e + d = 2e' + d$ for some $e' \in E$. Thus, we get $\Delta_1 \subseteq \mathrm{PF}(S \bowtie^d E).$ Now, let $f \in \mathrm{PF}(\tilde{E})$, then clearly $2f + d \notin S \bowtie^d E$.
Let $\mathfrak{f} \in \Delta_2$ and $0 \neq s \in S$, then we have $\mathfrak{f} + 2s = 2(f+s)+ d$ for some $f \in \mathrm{PF}(\tilde{E})$ and  $f+s \in E$ for $s \in S \setminus \tilde{E}$. This implies that $\mathfrak{f}+2s \in 2\cdot E + d$. For $e \in E$, we have $\mathfrak{f} + 2e + d = 2f + d + 2e + d = 2(f+e+d)$ for some $f \in \mathrm{PF}(\tilde{E})$. Therefore, $f + e \in E$ and hence $\mathfrak{f} + 2e + d \in 2 \cdot S$. Thus, we get $\Delta_2 \subseteq \mathrm{PF}(S \bowtie^d E)$. Now, consider $\mathfrak{f} \in \mathrm{PF}(S \bowtie^d E)$. Suppose $\mathfrak{f}$ is even then $\mathfrak{f} = 2f$ for some $f \in \mathbb{Z} \setminus S$. Therefore, we have $2f + 2s = 2(f+s) \in S \bowtie^d E$ for all $0 \neq s \in S$ and $2f + 2e + d = 2(f+e) + b \in S \bowtie^d E $ for all $e \in E$. Therefore, we get $f + s \in S$ for all $0 \neq s \in S$ and $ f + e \in E$ for all $e \in E$. Thus, we get $f \in \mathrm{PF}(S) \cap \mathrm{PF}(\tilde{E})$. Now suppose $\mathfrak{f}$ is odd. Since $d$ is odd, we can write $\mathfrak{f} = 2f + d$ for some $f \in \mathbb{Z} \setminus E$. Therefore, we have $2f + d + 2s = 2(f+s) + d \in S \bowtie^d E$ for all $0 \neq s \in S$. Suppose $f = 0$, then $d+2s \in S \bowtie^d E$ for all $0 \neq s \in S$. Therefore, $2s \in 2\cdot E$ for all $0 \neq s \in S$. This implies $E = S^*$, which is not the case. Therefore, we may assume $f \neq 0$. Thus, we get $f + s \in E$ for all $0 \neq s \in S$. Hence, $f \in \mathrm{PF}(\tilde{E})$ and $f+s \in E$ for $s \in S\setminus \tilde{E}$.

(ii) Follows from the proof of part (i).

(iii) Similar to part (i), it is easy to see that $\{2f + d \mid f \in \mathrm{PF}(S)\} \subseteq \mathrm{PF}(S \bowtie^d S).$ Now assume $\mathfrak{f} \in \mathrm{PF}(S \bowtie^d S)$, then $f$ is not an even integer. Suppose $f$ is even then $f = 2z $ for some $z \in \mathbb{Z}\setminus S$, and $\mathfrak{f} + 2s +d \in S \bowtie^d S$ for all $s \in S$. Thus, $z + s \in S$ for all $s \in S$. Now, taking $s = 0$, we get a contradiction. Thus, $\mathfrak{f}$ has to be odd. Write $ \mathfrak{f} = 2z+d$ for some $z \in \mathbb{Z} \setminus S$. Therefore, for all $0 \neq s \in S$, we have $\mathfrak{f} + 2s \in S \bowtie^d S$. Thus, we get $z + s \in S$ for all $0 \neq s \in S$, and hence $z \in \mathrm{PF}(S)$. Therefore, we get $\mathrm{PF}(S \bowtie^d S) \subseteq \{2f + d \mid f \in \mathrm{PF}(S)\}$.
\end{proof}

\begin{example}
\begin{itemize}
\rm{\item[(i)] Let $S = \langle 3,4,5 \rangle$, $E = \{5+i \mid i \in \mathbb{N}\}$ and $d = 11$. Note that $E \subseteq S$ is an ideal of $S$. Therefore, $\tilde{E} = \langle 5,6,7,8,9 \rangle$ and $\mathrm{PF}(\tilde{E}) = \{1,2,3,4\}$. Since $\mathrm{PF}(S) = \{1,2\}$, by Theorem \ref{pfduplication} part (i), we get $\mathrm{PF}(S \bowtie^{11} E) = \{2,4,15,17,19\}$.

\item[(ii)] Let $S = \langle 3,4,5 \rangle$, $E = S^*$ and $d = 11$. Then, $\mathrm{PF}(S) = \{1,2\}$. Therefore, by Theorem \ref{pfduplication} part (ii), we get $\mathrm{PF}(S \bowtie^{11} E) = \{2,4,11,13,15\}.$

\item[(iii)] Let $S = \langle 3,4,5 \rangle$, $E = S$ and $d = 11$. Then, $\mathrm{PF}(S) = \{1,2\}$. Therefore, by Theorem \ref{pfduplication} part (iii), we get $\mathrm{PF}(S \bowtie^{11} E) = \{13,15\}.$}
\end{itemize}

\end{example}

\begin{remark}
For a fixed $r > 0$, there are infinitely many numerical semigroups with type $r$. Let $S = \langle r+1, r+2, \ldots, 2r+1 \rangle$. Then $\mathrm{PF}(S) =\{1,2, \ldots, r\} $, and thus $\mathrm{type}(k\llbracket S \rrbracket) = r$. Take any odd natural number $d \in S$, by Theorem \ref{pfduplication} part (iii), we get $\mathrm{type}(k\llbracket S \bowtie^d S \rrbracket) = r.$ Since there are infintely many choices for $d$, we get an infinite class of numerical semigroups with type $r$.
\end{remark}

\begin{theorem}\label{minreddup}
Let $S$ be a numerical semigroup and $E \subseteq S$ be an ideal of $S$. Then, we have the following:
\begin{itemize}
\item[(i)] Let $E = S$. If $k\llbracket S \rrbracket$ has minimal reduced type, then $k\llbracket S \bowtie^d E \rrbracket$ has minimal reduced type.

\item[(ii)] Let $E = S^*$, $k\llbracket S \bowtie^d E \rrbracket$ is never Gorenstein. 
\begin{enumerate}

\item[(a)] Suppose $k\llbracket S \rrbracket$ is Gorenstein, 
\begin{enumerate}
\item[(a.1)] when $d < 2 \mathrm{F}(S)$, $k\llbracket S \bowtie^d E \rrbracket$ has minimal reduced type if and only if $2 m(S) < d.$
\item[(a.2)] when $d>2\mathrm{F}(S)$, $k\llbracket S \bowtie^d E \rrbracket$ has minimal reduced type if and only if $m(S) < \mathrm{F}(S)+1.$
\end{enumerate}

\item[(b)] Suppose $k\llbracket S \rrbracket$ is not Gorenstein. 
\begin{enumerate}
\item[(b.1)] For $\mathrm{max}~\mathrm{PF}'(S \bowtie^d E) \neq 2\mathrm{F}(S)$, if $k\llbracket S \rrbracket$ has minimal reduced type then $k\llbracket S \bowtie^d E \rrbracket$ also has minimal reduced type.
\item[(b.2)] For $\mathrm{max}~\mathrm{PF}'(S \bowtie^d E) = 2\mathrm{F}(S)$, $k\llbracket S \bowtie^d E \rrbracket$ has minimal reduced type if and only if $d > 2m(S)$.
\end{enumerate}

\end{enumerate}
\item[(iii)] Let $E \neq S,S^*$,
\begin{enumerate}
\item[(a)] Suppose $\mathrm{F}(S) \neq \mathrm{F}(\tilde{E})$. If $k\llbracket \tilde{E} \rrbracket$ has minimal reduced type then $k\llbracket S \bowtie^d E \rrbracket$ also has minimal reduced type.
\item[(b)] Suppose $\mathrm{F}(S) = \mathrm{F}(\tilde{E})$,
\begin{enumerate}
\item[(b.1)] For $\mathrm{max}~\mathrm{PF}'(S \bowtie^d E) \neq 2\mathrm{F}(S)$, if $k\llbracket \tilde{E} \rrbracket$ has minimal reduced type then $k\llbracket S \bowtie^d E \rrbracket$ also has minimal reduced type.
\item[(b.2)] For $\mathrm{max}~\mathrm{PF}'(S \bowtie^d E) = 2\mathrm{F}(S)$, $k\llbracket S \bowtie^d E \rrbracket$ has minimal reduced type if and only if $d > 2m(S)$.
\end{enumerate}
\end{enumerate}
\end{itemize}
\end{theorem}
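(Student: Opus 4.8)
Throughout I write $T = S \bowtie^{d} E$. The single tool is Proposition \ref{minred}: $k\llbracket T\rrbracket$ has minimal reduced type if and only if $\max \mathrm{PF}'(T) < \mathrm{F}(T) - m(T) + 1$. My plan is to substitute the description of $\mathrm{PF}(T)$ from Theorem \ref{pfduplication} into this inequality, after first recording $m(T)$ and $\mathrm{F}(T)$. When $E \neq S$ one has $0 \notin E$ (otherwise $S = S + 0 \subseteq E \subseteq S$), so $\min E \geq m(S)$ and the least element of $2\cdot E + d$ is $2\min E + d > 2m(S)$; hence $m(T) = 2m(S)$. When $E = S$ one has $m(T) = \min\{2m(S), d\} \leq 2m(S)$, which is all that is used. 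Part (i) is then immediate: $f \mapsto 2f+d$ is an order-preserving bijection $\mathrm{PF}(S) \to \mathrm{PF}(T)$, so $\mathrm{F}(T) = 2\mathrm{F}(S)+d$ and $\max\mathrm{PF}'(T) = 2\max\mathrm{PF}'(S)+d$; minimal reduced type of $k\llbracket S\rrbracket$ gives $\max\mathrm{PF}'(S) \leq \mathrm{F}(S)-m(S)$, whence $\max\mathrm{PF}'(T) \leq 2\mathrm{F}(S)+d-2m(S) \leq \mathrm{F}(T)-m(T)$.

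For part (ii), Theorem \ref{pfduplication} gives $\mathrm{type}(k\llbracket T\rrbracket) = 2\,\mathrm{type}(k\llbracket S\rrbracket)+1 \geq 3$, so $k\llbracket T\rrbracket$ is never Gorenstein, and $\mathrm{F}(T) = 2\mathrm{F}(S)+d$, $m(T)=2m(S)$. If $k\llbracket S\rrbracket$ is Gorenstein, then $\mathrm{PF}(T) = \{d,\,2\mathrm{F}(S),\,2\mathrm{F}(S)+d\}$ and $\max\mathrm{PF}'(T) = \max\{d,2\mathrm{F}(S)\}$; since $d$ is odd while $2\mathrm{F}(S)$ and $2m(S)$ are even, feeding this into the criterion separates cleanly into (a.1) ($d<2\mathrm{F}(S)$, condition $d>2m(S)$) and (a.2) ($d>2\mathrm{F}(S)$, condition $m(S)<\mathrm{F}(S)+1$). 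If $k\llbracket S\rrbracket$ is not Gorenstein, I put $\mathrm{F}'(S)=\max\mathrm{PF}'(S)$; then $\max\mathrm{PF}'(T) = \max\{2\mathrm{F}(S),\,2\mathrm{F}'(S)+d\}$. When it equals $2\mathrm{F}(S)$ (b.2) the criterion reduces by the same parity comparison to $d>2m(S)$; when it equals $2\mathrm{F}'(S)+d$ (b.1) the criterion becomes $\mathrm{F}'(S)\leq\mathrm{F}(S)-m(S)$, which is exactly minimal reduced type of $k\llbracket S\rrbracket$, yielding the stated implication.

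Part (iii) is the substantive case. The set $\mathrm{PF}(T)=\Delta_1\cup\Delta_2$ splits into even elements $\Delta_1 = \{2f : f \in \mathrm{PF}(S)\cap\mathrm{PF}(\tilde E)\}$ and odd elements $\Delta_2$. Since $\tilde E \subseteq S$ gives $\mathrm{F}(\tilde E)\geq\mathrm{F}(S)$, and $\mathrm{F}(\tilde E)$ satisfies the defining condition of $\Delta_2$ (because $\mathrm{F}(\tilde E)+n \in E$ for all $n\geq 1$), the largest odd element $2\mathrm{F}(\tilde E)+d$ exceeds every even element $2f\leq 2\mathrm{F}(S)$; hence $\mathrm{F}(T)=2\mathrm{F}(\tilde E)+d$. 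Writing $M_1=\max(\mathrm{PF}(S)\cap\mathrm{PF}(\tilde E))$ and $M_2$ for the largest admissible $f\in\mathrm{PF}(\tilde E)$ with $f<\mathrm{F}(\tilde E)$, one has $\max\mathrm{PF}'(T)=\max\{2M_1,\,2M_2+d\}$ (the even term absent if $\Delta_1=\emptyset$), with $M_2\leq\max\mathrm{PF}'(\tilde E)$. The odd part is controlled unconditionally: minimal reduced type of $k\llbracket\tilde E\rrbracket$ gives $M_2\leq \mathrm{F}(\tilde E)-m(\tilde E)\leq\mathrm{F}(\tilde E)-m(S)$, so $2M_2+d\leq\mathrm{F}(T)-2m(S)<\mathrm{F}(T)-m(T)+1$. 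For the even part the sub-cases diverge: in (a), $\mathrm{F}(\tilde E)>\mathrm{F}(S)$ forces $M_1\leq\mathrm{F}(S)<\mathrm{F}(\tilde E)$, so $M_1\in\mathrm{PF}'(\tilde E)$ and the same bound gives $2M_1<\mathrm{F}(T)-m(T)+1$, proving minimal reduced type. In (b), $\mathrm{F}(\tilde E)=\mathrm{F}(S)$, and the only possible obstruction is the even value $2\mathrm{F}(S)$, present exactly when $\mathrm{F}(S)\in\mathrm{PF}(\tilde E)$: if $\max\mathrm{PF}'(T)\neq 2\mathrm{F}(S)$ (b.1) the earlier bounds apply verbatim, while if $\max\mathrm{PF}'(T)=2\mathrm{F}(S)$ (b.2) the criterion collapses, via the parity of $d$ against $2m(S)$, to $d>2m(S)$.

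I expect the main obstacle to be the bookkeeping in part (iii): correctly reading off $\mathrm{F}(T)$ and, more delicately, the \emph{second}-largest element of $\mathrm{PF}(T)$ from the interleaved even family $\Delta_1$ and odd family $\Delta_2$. The crucial observation that unlocks sub-case (a) is that $\mathrm{F}(\tilde E)>\mathrm{F}(S)$ forces every even element of $\mathrm{PF}'(T)$ to descend from $\mathrm{PF}'(\tilde E)$ rather than from $\mathrm{F}(\tilde E)$, so that the minimal reduced type of $k\llbracket\tilde E\rrbracket$ alone suffices; distinguishing this from the boundary case $\mathrm{F}(\tilde E)=\mathrm{F}(S)$, where the even value $2\mathrm{F}(S)$ can genuinely become the governing term, is what produces the split between (b.1) and (b.2).
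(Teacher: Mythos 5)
Your proposal is correct and follows essentially the same route as the paper: feed the description of $\mathrm{PF}(S\bowtie^d E)$ from Theorem \ref{pfduplication} into the criterion of Proposition \ref{minred}, after identifying $\mathrm{F}(S\bowtie^d E)=2\mathrm{F}(\tilde{E})+d$ and the multiplicity of the duplication, and then case-split on which family contributes $\mathrm{max}~\mathrm{PF}'$. Your explicit justification that $m(S\bowtie^d E)=2m(S)$ when $E\neq S$, and the parity bookkeeping in (ii) and (iii)(b.2), match the paper's argument.
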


\begin{proof}
(i) Suppose $k\llbracket S \rrbracket$ has minimal reduced type. This implies that $\mathrm{max}~\mathrm{PF}'(S) < \mathrm{F}(S) - m(S) + 1$. From Theorem \ref{pfduplication}, note that $\mathrm{F}(S \bowtie^d E) = 2 \mathrm{F}(\tilde{E}) + d$. Therefore, we get
\[\mathrm{max}~\mathrm{PF}'(S \bowtie^d S) = 2 ~ \mathrm{max}~\mathrm{PF}'(S)+d < 2 \mathrm{F}(S)+d-2m(S)+ 1 = \mathrm{F}(S \bowtie^d S)-2m(S) + 1. \]
If $d > 2 m(S)$, the multiplicity of $S \bowtie^d S$ is $2m(S)$, and hence  $k\llbracket S \bowtie^d S \rrbracket$ has minimal reduced type. If $d < 2 m(S)$, $\mathrm{F}(S \bowtie^d S)-2m(S) + 1 < \mathrm{F}(S \bowtie^d S)-d + 1.$ Since $d < 2 m(S)$ the multiplicity of $S \bowtie^d S$ is $d$, and hence  $k\llbracket S \bowtie^d S \rrbracket$ has minimal reduced type.

(ii) Since $E = S^*$, $k\llbracket S \bowtie^d E \rrbracket$ is never Gorenstein follows from Theorem \ref{pfduplication} part (ii). For (a), assume $k\llbracket S \rrbracket$ is Gorenstein, i.e $\mathrm{PF}(S) = \{\mathrm{F}(S)\}$. For (a.1), since $d < 2 \mathrm{F}(S)$, by Theorem \ref{pfduplication} part (ii), we get $\mathrm{max}~\mathrm{PF}'(S \bowtie^d E) = 2 \mathrm{F}(S)$. Since $E = S^*$, the multiplicity of $S \bowtie^d E$ is $2m(S)$. Therefore, $k\llbracket S \bowtie^d E \rrbracket$ has minimal reduced type  if and only if $  2 \mathrm{F}(S) < \mathrm{F}(S \bowtie^d E) - 2m(S) + 1$. Since, $\mathrm{F}(S \bowtie^d E) = 2 \mathrm{F}(\tilde{E}) + d$, $k\llbracket S \bowtie^d E \rrbracket$ has minimal reduced type  if and only if $2m(S) < d+1$. For (a.2), since $d > 2 \mathrm{F}(S)$, by Theorem \ref{pfduplication} part (ii), we get $\mathrm{max}~\mathrm{PF}'(S \bowtie^d E) = d$. Therefore, $k\llbracket S \bowtie^d E \rrbracket$ has minimal reduced type  if and only if $  d < \mathrm{F}(S \bowtie^d E) - 2m(S) + 1$. Since, $\mathrm{F}(S \bowtie^d E) = 2 \mathrm{F}(\tilde{E}) + d$, $k\llbracket S \bowtie^d E \rrbracket$ has minimal reduced type  if and only if $2m(S) < 2\mathrm{F}(S)+1$. For (b), assume $k\llbracket S \rrbracket$ is not Gorenstein i.e $\mathrm{PF}'(S) \neq \emptyset.$ For (b.1), since $k\llbracket S \rrbracket$ has minimal reduced type, we get $\mathrm{max}~\mathrm{PF}'(S) < \mathrm{F}(S) - m(S) + 1$. Therefore,  $2 \mathrm{max}~\mathrm{PF}'(S) + d < 2\mathrm{F}(S) - 2m(S) + 1 + d$. Since $\mathrm{F}(S \bowtie^d E) = 2 \mathrm{F}(\tilde{E}) + d$ and $\mathrm{max}~\mathrm{PF}'(S \bowtie^d E) \neq 2\mathrm{F}(S)$, we get $\mathrm{max}~\mathrm{PF}'(S \bowtie^d E) < \mathrm{F}(S \bowtie^d E) - m(S \bowtie^d E) + 1$. Hence, $k\llbracket S \bowtie^d E \rrbracket$ has minimal reduced type. For (b.2), since $\mathrm{max}~\mathrm{PF}'(S \bowtie^d E) = 2\mathrm{F}(S)$, $k\llbracket S \bowtie^d E \rrbracket$ has minimal reduced type  if and only if $ 2\mathrm{F}(S)  < \mathrm{F}(S \bowtie^d E) - 2m(S) + 1$.  Since, $\mathrm{F}(S \bowtie^d E) = 2 \mathrm{F}(\tilde{E}) + d$, $k\llbracket S \bowtie^d E \rrbracket$ has minimal reduced type  if and only if $2m(S) < d +1$. Since $d$ is odd, $k\llbracket S \bowtie^d E \rrbracket$ has minimal reduced type  if and only if $2m(S) < d$.

(iii) Assume $E \neq S, S^*$. For (a), we have $\mathrm{F}(S) \neq \mathrm{F}(\tilde{E})$. By Theorem \ref{pfduplication} part (ii), either $\mathrm{max}~\mathrm{PF}'(S \bowtie^d E) \in \Delta_1 $ or $\mathrm{max}~\mathrm{PF}'(S \bowtie^d E) \in \Delta_2$. Suppose $\mathrm{max}~\mathrm{PF}'(S \bowtie^d E) \in \Delta_1 $. Since $\mathrm{F}(S) \neq \mathrm{F}(\tilde{E})$, we get $\mathrm{max}~\mathrm{PF}'(S \bowtie^d E) \leq  2 \mathrm{max}~\mathrm{PF}'(\tilde{E})$. Also, since $\tilde{E}$ has minimal reduced type, we have   $\mathrm{max}~\mathrm{PF}'(\tilde{E}) < \mathrm{F}(\tilde{E}) - m(\tilde{E}) + 1.$ This implies $2 \mathrm{max}~\mathrm{PF}'(\tilde{E}) < 2 \mathrm{F}(\tilde{E}) - 2 m(\tilde{E}) + 1.$ Therefore, we get 
$$\mathrm{max}~\mathrm{PF}'(S \bowtie^d E) \leq  2 \mathrm{max}~\mathrm{PF}'(\tilde{E}) < 2 \mathrm{F}(\tilde{E}) - 2 m(\tilde{E}) + 1 < 2 \mathrm{F}(\tilde{E}) - 2 m(\tilde{E}) + 1 + d.$$ 
Since $\mathrm{F}(S \bowtie^d E) = 2 \mathrm{F}(\tilde{E}) + d$ and $2 m(\tilde{E}) \geq 2 m(S) = m(S \bowtie^d E)$, we conclude that $\mathrm{max}~\mathrm{PF}'(S \bowtie^d E) < \mathrm{F}(S \bowtie^d E) - m(S \bowtie^d E) + 1.$ Hence, $k\llbracket S \bowtie^d E \rrbracket$ has minimal reduced type. Now, suppose, $\mathrm{max}~\mathrm{PF}'(S \bowtie^d E) \in \Delta_2 $. This implies that $\mathrm{max}~\mathrm{PF}'(S \bowtie^d E) \leq  2 \mathrm{max}~\mathrm{PF}'(\tilde{E}) + d$. Remaining part follows analogously. For part (b), assume $\mathrm{F}(S) = \mathrm{F}(\tilde{E})$. For (b.1), since $\mathrm{max}~\mathrm{PF}'(S \bowtie^d E) \neq 2\mathrm{F}(S)$, observe that $\mathrm{max}~\mathrm{PF}'(S \bowtie^d E) \leq 2\mathrm{max}~\mathrm{PF}'(\tilde{E}) + d.$ Again, remaining part follows analogously. For (b.2), since $\mathrm{max}~\mathrm{PF}'(S \bowtie^d E) = 2\mathrm{F}(S)$, $k\llbracket S \bowtie^d E \rrbracket$ has minimal reduced type if and only if $ 2\mathrm{F}(S) < \mathrm{F}(S \bowtie^d E) - 2m(S) + 1$. Since, $\mathrm{F}(S \bowtie^d E) = 2 \mathrm{F}(\tilde{E}) + d$ and $\mathrm{F}(S) = \mathrm{F}(\tilde{E})$, $k\llbracket S \bowtie^d E \rrbracket$ has minimal reduced type  if and only if $2m(S) < d + 1$. Since $d$ is odd, $k\llbracket S \bowtie^d E \rrbracket$ has minimal reduced type  if and only if $2m(S) < d$.
\end{proof}

\begin{remark} \rm{For a fixed $r > 0$, there are infinitely many numerical semigroups of minimal reduced type with type $r$. Let $S = \langle r+1, r+1+(r+2), \ldots, r+1 +r(r+2) \rangle$. Since, embedding dimension of $S$ is equal to the multiplicity of $S$, by \cite[Corollary 3.2]{numerical}, we get $\mathrm{type}(S) = r$. Moreover, $\mathrm{PF}(S) = \{r+2, 2(r+2), \ldots, r(r+2)\}$. Therefore, $\mathrm{F}(S) = r(r+2)$, $\mathrm{max}~\mathrm{PF}'(S ) = (r-1)(r+2)$, and $\mathrm{max}~\mathrm{PF}'(S ) < \mathrm{F}(S) - m(S) + 1$. Hence, we obtain that $S$ is of minimal reduced type. Now, take any odd natural number $d \in S$, by Theorem \ref{pfduplication} part (iii), $\mathrm{type}(k\llbracket S \bowtie^d S \rrbracket) = r$, and by Theorem \ref{minreddup} part (i), $k\llbracket S \bowtie^d S \rrbracket$ has minimal reduced type. Since there are infintely many choices for $d$, for each $r > 0$, we get an infinite class of numerical semigroups of minimal reduced type with type $r$.}
\end{remark}

\begin{remark} \rm{In general, even if both $k \llbracket S \rrbracket$ and $k\llbracket \tilde{E} \rrbracket $ have maximal reduced type, $k\llbracket S \bowtie^d E \rrbracket$ may not have maximal reduced type. Because of the difficulty of keeping track of minimum element of the set of pseudo-Frobenius elements of the duplication, it is not feasible to give a result for maximal reduced type in terms of $d$ and $m(S)$ similar to the Theorem \ref{minreddup}.} 
\end{remark}

\begin{example}
 \rm{Let $S = \langle 3,4,5 \rangle$, $E = \{5+i \mid i \in \mathbb{N}\}$ and $d = 11$. Note that $E \subseteq S$ is an ideal of $S$. Note that $\tilde{E} = \langle 5,6,7,8,9 \rangle$ and $\mathrm{PF}(\tilde{E}) = \{1,2,3,4\}$. Observe that both $k \llbracket S \rrbracket$ and $k\llbracket \tilde{E} \rrbracket $ have maximal reduced type. Since $\mathrm{PF}(S) = \{1,2\}$, by Theorem \ref{pfduplication} part (i), we get $\mathrm{PF}(S \bowtie^{11} E) = \{2,4,15,17,19\}.$
Therefore, $\mathrm{min}~\mathrm{PF}(S \bowtie^{11} E) = 2 \ngeq 14 = 19-6+1 = \mathrm{F}(S \bowtie^{11} E) - m(S \bowtie^{11} E) + 1$. Therefore, $k\llbracket S \bowtie^d E \rrbracket$ does not have maximal reduced type.}
\end{example}

In the following results, we explore the maximal reduced type for the duplication of a numerical semigroup when $E = S, S^*$.

\begin{proposition}\label{maxreddup}
Let $S$ be a numerical semigroup. Then, we have the following:
\begin{itemize}
\item[(i)] If $d > 2 m(S)$, then $k\llbracket S \rrbracket$ has maximal reduced type if and only if $k\llbracket S \bowtie^d S \rrbracket$ has maximal reduced type.
\item[(ii)] If $d < 2 m(S)$, then $k\llbracket S \bowtie^d S \rrbracket$ has maximal reduced type if and only if $\frac{d-1}{2} \geq \mathrm{F}(S)-\mathrm{min}~\mathrm{PF}(S)$.
 
\end{itemize}
\end{proposition}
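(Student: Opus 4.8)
The plan is to reduce both parts to the maximal-reduced-type criterion of Proposition \ref{maxred}, applied to $T := S \bowtie^d S$, once the four invariants $\mathrm{PF}(T)$, $\mathrm{F}(T)$, $m(T)$, and $\mathrm{min}\,\mathrm{PF}(T)$ are expressed through the corresponding data for $S$. The main input is Theorem \ref{pfduplication}(iii), which gives $\mathrm{PF}(T) = \{2f + d \mid f \in \mathrm{PF}(S)\}$. Because the map $f \mapsto 2f + d$ is strictly increasing, it preserves extremal elements, so I would record at once that $\mathrm{min}\,\mathrm{PF}(T) = 2\,\mathrm{min}\,\mathrm{PF}(S) + d$ and $\mathrm{F}(T) = 2\,\mathrm{F}(S) + d$ (the latter is also the $E = S$ case of the identity $\mathrm{F}(S \bowtie^d E) = 2\,\mathrm{F}(\tilde{E}) + d$ used in Theorem \ref{minreddup}).

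Next I would compute the multiplicity. The nonzero elements of $T = 2 \cdot S \cup (2 \cdot S + d)$ split into the even values $2s$ for $s \in S \setminus \{0\}$, with least element $2m(S)$, and the odd values $2s + d$ for $s \in S$, with least element $d$ (attained at $s = 0$). Hence $m(T) = \min\{2m(S),\, d\}$, and since $d$ is odd while $2m(S)$ is even these can never coincide, which is exactly what splits the argument into the two regimes $d > 2m(S)$ and $d < 2m(S)$.

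For part (i), where $m(T) = 2m(S)$, Proposition \ref{maxred} says $k\llbracket T \rrbracket$ has maximal reduced type iff $\mathrm{min}\,\mathrm{PF}(T) \geq \mathrm{F}(T) - m(T) + 1$; substituting the values above and cancelling $d$ reduces this to $2\,\mathrm{min}\,\mathrm{PF}(S) \geq 2\bigl(\mathrm{F}(S) - m(S)\bigr) + 1$. Here the one genuinely non-mechanical step is a parity observation: the left-hand side is even and the right-hand side is odd, so this inequality is equivalent to $\mathrm{min}\,\mathrm{PF}(S) \geq \mathrm{F}(S) - m(S) + 1$, which is precisely the condition for $k\llbracket S \rrbracket$ to have maximal reduced type; this yields the biconditional. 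For part (ii), where $m(T) = d$, the same criterion becomes $2\,\mathrm{min}\,\mathrm{PF}(S) + d \geq 2\,\mathrm{F}(S) + 1$, and isolating $d$ turns this directly into $\frac{d-1}{2} \geq \mathrm{F}(S) - \mathrm{min}\,\mathrm{PF}(S)$, the stated condition.

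I do not expect a serious obstacle, since everything reduces to bookkeeping with the affine image of $\mathrm{PF}(S)$ together with a case split on the multiplicity. The one place demanding care is the parity step in part (i): one must notice that the even/odd mismatch collapses the nonstrict inequality $2\,\mathrm{min}\,\mathrm{PF}(S) \geq 2(\mathrm{F}(S) - m(S)) + 1$ exactly onto the maximal-reduced-type condition for $S$, with no off-by-one slack, which is what makes part (i) a clean biconditional rather than a one-directional implication.
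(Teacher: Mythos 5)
Your argument is correct and follows essentially the same route as the paper: both use Theorem \ref{pfduplication}(iii) to get $\mathrm{min}\,\mathrm{PF}(S\bowtie^d S)=2\,\mathrm{min}\,\mathrm{PF}(S)+d$ and $\mathrm{F}(S\bowtie^d S)=2\,\mathrm{F}(S)+d$, identify the multiplicity as $\min\{2m(S),d\}$ according to the sign of $d-2m(S)$, and substitute into the criterion of Proposition \ref{maxred}. Your parity observation in part (i) is just a compact repackaging of the integrality step the paper carries out in its two separate implications, so there is no substantive difference.
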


\begin{proof}
(i) Suppose $k\llbracket S \rrbracket$ has maximal reduced type. This implies that $\mathrm{min}~\mathrm{PF}(S) \geq \mathrm{F}(S) - m(S) + 1$. From Theorem \ref{pfduplication}, note that $\mathrm{F}(S \bowtie^d S) = 2 \mathrm{F}(S) + d$. Therefore, we get
\[\mathrm{min}~\mathrm{PF}(S \bowtie^d S) = 2 ~ \mathrm{min}~\mathrm{PF}(S)+d \geq 2 \mathrm{F}(S)+d-2m(S)+1 \geq \mathrm{F}(S \bowtie^d S)-2m(S) + 1.\]
Since $d > 2 m(S)$, the multiplicity of $S \bowtie^d S$ is $2m(S)$, and hence  $k\llbracket S \bowtie^d S \rrbracket$ has maximal reduced type. For the converse part, we will prove that if $k\llbracket S \rrbracket$ does not have maximal reduced type, then $k\llbracket S \bowtie^d S \rrbracket$ does not have maximal reduced type. Suppose $k\llbracket S \rrbracket$ does not have maximal reduced type. This implies that $\mathrm{min}~\mathrm{PF}(S) < \mathrm{F}(S) - m(S) + 1$. Therefore, we get $ 2 ~ \mathrm{min}~\mathrm{PF}(S) + 1 <  2 ~ \mathrm{F}(S) - 2m(S) + 2 $ and thus  $ 2 ~ \mathrm{min}~\mathrm{PF}(S) + d <  2 ~ \mathrm{F}(S) + d - 2m(S) + 1 $. Thus, by Theorem \ref{pfduplication}, we get  $\mathrm{min}~\mathrm{PF}(S \bowtie^d S) < \mathrm{F}(S \bowtie^d S) - 2m(S) + 1$. Since $d > 2m(S)$, the multiplicity of $S \bowtie^d S$ is $2m(S)$, and hence $k\llbracket S \bowtie^d S \rrbracket$ does not have maximal reduced type.\\
\medskip
(ii) If $d < 2m(S)$ then we have $k\llbracket S \bowtie^d S \rrbracket$ has maximal reduced type if and only if $$2 ~ \mathrm{min}~\mathrm{PF}(S)+d \geq 2 \mathrm{F}(S)+1 ~ \text{if and only if}~ \frac{d-1}{2} \geq \mathrm{F}(S)-\mathrm{min}~\mathrm{PF}(S).$$
\end{proof}

\begin{remark} \rm{For a fixed $r > 0$, there are infinitely many numerical semigroups of maximal reduced type with type $r$. Let $S = \langle r+1, r+2, \ldots, 2r+1 \rangle$. Then $\mathrm{PF}(S) =\{1,2, \ldots, r\} $, and thus $\mathrm{type}(k\llbracket S \rrbracket) = r$. It is evident that $k\llbracket S \rrbracket = r$ has maximal reduced type. Now, take any odd natural number $d \in S$ such that $d > 2m(S)$, by Theorem \ref{pfduplication} part (iii), $\mathrm{type}(k\llbracket S \bowtie^d S \rrbracket) = r$, and by Theorem \ref{maxreddup} part (i), $k\llbracket S \bowtie^d S \rrbracket$ has maximal reduced type. Since there are infintely many choices for $d$, for each $r > 0$, we get an infinite class of numerical semigroups of maximal reduced type with type $r$.}
\end{remark}


\begin{example}
\rm{Let $S = \langle 5,6,7\rangle$, and $d = 7$. Note that $\mathrm{PF}(S) = \{8,9\}$. The duplication $S \bowtie^7 S = \langle 7,10,12,14 \rangle$, and $\mathrm{PF}(S \bowtie^7 S) = \{23,25\}$. Thus $\mathrm{min}~\mathrm{PF}(S \bowtie^7 S) = 23 > 19 = \mathrm{F}(S \bowtie^7 S) - m(S\bowtie^7S) + 1$. Thus, $k\llbracket S \bowtie^7 S \rrbracket$ has maximal reduced type.}
\end{example}

\begin{proposition}\label{maxreddup2}
Let $S$ be a numerical semigroup. Then, we have the following:
\begin{itemize}
\item[(i)] If $d < 2~\mathrm{min}~\mathrm{PF}(S)$, then $k\llbracket S \bowtie^d S^* \rrbracket$ has maximal reduced type  if and only if $m(S) > \mathrm{F}(S)$.
\item[(ii)] If $d > 2~\mathrm{min}~\mathrm{PF}(S)$, then $k\llbracket S \bowtie^d S^* \rrbracket$ has maximal reduced type if and only if $m(S) - \mathrm{F}(S) \geq \frac{d+1}{2} - \mathrm{min}~\mathrm{PF}(S)$.
\end{itemize}
\end{proposition}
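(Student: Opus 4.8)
The plan is to read everything off the explicit description of the pseudo-Frobenius set of the duplication and then feed it into the maximal reduced type criterion of Proposition \ref{maxred}. By Theorem \ref{pfduplication}(ii), since $E = S^*$,
\[
\mathrm{PF}(S \bowtie^d S^*) = \{d\} \cup \{2f \mid f \in \mathrm{PF}(S)\} \cup \{2f + d \mid f \in \mathrm{PF}(S)\}.
\]
Because $\mathrm{F}(S) = \mathrm{max}~\mathrm{PF}(S)$ and $d > 0$, the largest element here is $2\mathrm{F}(S) + d$, so $\mathrm{F}(S \bowtie^d S^*) = 2\mathrm{F}(S) + d$. The smallest element is $\mathrm{min}\{d, 2~\mathrm{min}~\mathrm{PF}(S)\}$, since $2~\mathrm{min}~\mathrm{PF}(S) + d$ strictly exceeds both $d$ and $2~\mathrm{min}~\mathrm{PF}(S)$; this comparison is exactly what forces the split into cases (i) and (ii).

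First I would record the multiplicity. Writing $S \bowtie^d S^* = 2\cdot S \cup (2\cdot S^* + d)$, the smallest nonzero element of $2\cdot S$ is $2m(S)$, while every element of $2\cdot S^* + d$ is at least $2m(S) + d > 2m(S)$; hence $m(S \bowtie^d S^*) = 2m(S)$ in both cases. With the Frobenius number, the minimum pseudo-Frobenius number, and the multiplicity all in hand, the two statements become a direct substitution into Proposition \ref{maxred}.

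For part (i), the hypothesis $d < 2~\mathrm{min}~\mathrm{PF}(S)$ gives $\mathrm{min}~\mathrm{PF}(S \bowtie^d S^*) = d$. By Proposition \ref{maxred}, maximal reduced type is equivalent to $d \geq \mathrm{F}(S \bowtie^d S^*) - m(S \bowtie^d S^*) + 1 = 2\mathrm{F}(S) + d - 2m(S) + 1$, which simplifies to $2m(S) \geq 2\mathrm{F}(S) + 1$, i.e.\ $m(S) > \mathrm{F}(S)$. For part (ii), the hypothesis $d > 2~\mathrm{min}~\mathrm{PF}(S)$ gives $\mathrm{min}~\mathrm{PF}(S \bowtie^d S^*) = 2~\mathrm{min}~\mathrm{PF}(S)$, and the criterion $2~\mathrm{min}~\mathrm{PF}(S) \geq 2\mathrm{F}(S) + d - 2m(S) + 1$ rearranges directly to $m(S) - \mathrm{F}(S) \geq \frac{d+1}{2} - \mathrm{min}~\mathrm{PF}(S)$.

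I do not expect a serious obstacle: the whole argument is bookkeeping with the three families in $\mathrm{PF}(S \bowtie^d S^*)$. The only point requiring genuine care is the identification of $\mathrm{min}~\mathrm{PF}(S \bowtie^d S^*)$, where one must confirm that the minimum is never attained by $2~\mathrm{min}~\mathrm{PF}(S) + d$ and that the relevant comparison is $d$ versus $2~\mathrm{min}~\mathrm{PF}(S)$, which is precisely what produces the two-case statement. I would also double-check the integrality step in part (i), namely that $2m(S) \geq 2\mathrm{F}(S) + 1$ is equivalent to the strict inequality $m(S) > \mathrm{F}(S)$ (the left side being even and the right side odd).
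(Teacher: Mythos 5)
Your proposal is correct and follows essentially the same route as the paper: read off $\mathrm{min}~\mathrm{PF}(S\bowtie^d S^*)$ from Theorem \ref{pfduplication}(ii) according to whether $d$ or $2\,\mathrm{min}~\mathrm{PF}(S)$ is smaller, use $\mathrm{F}(S\bowtie^d S^*)=2\mathrm{F}(S)+d$ and $m(S\bowtie^d S^*)=2m(S)$, and substitute into Proposition \ref{maxred}. Your extra care in justifying the multiplicity computation and the integrality step $2m(S)\geq 2\mathrm{F}(S)+1\iff m(S)>\mathrm{F}(S)$ only makes explicit what the paper leaves implicit.
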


\begin{proof}
(i) Since, $d < 2~\mathrm{min}~\mathrm{PF}(S)$, by Theorem \ref{pfduplication} part (ii), we get $\mathrm{min}~\mathrm{PF}(S \bowtie^d S^*) = d$. Therefore, $k\llbracket S \bowtie^d S^* \rrbracket$ has maximal reduced type  if and only if $d \geq \mathrm{F}(S \bowtie^d S^*) - 2m(S) + 1$. Since, $\mathrm{F}(S \bowtie^d S^*) = 2 \mathrm{F}(S) + d$, $k\llbracket S \bowtie^d S^* \rrbracket$ has maximal reduced type  if and only if $2m(S) - 1 \geq 2 \mathrm{F}(S)$. Thus, $k\llbracket S \bowtie^d S^* \rrbracket$ has maximal reduced type  if and only if $m(S) > \mathrm{F}(S)$.

(ii) Since, $d > 2~\mathrm{min}~\mathrm{PF}(S)$, by Theorem \ref{pfduplication} part (ii), we get $\mathrm{min}~\mathrm{PF}(S \bowtie^d S^*) = 2~\mathrm{min}~\mathrm{PF}(S)$. Therefore, $k\llbracket S \bowtie^d S^* \rrbracket$ has maximal reduced type  if and only if $ 2~\mathrm{min}~\mathrm{PF}(S) \geq \mathrm{F}(S \bowtie^d S^*) - 2m(S) + 1$. Since, $\mathrm{F}(S \bowtie^d S^*) = 2 \mathrm{F}(S) + d$, $k\llbracket S \bowtie^d S^* \rrbracket$ has maximal reduced type  if and only if $2m(S) - 2 \mathrm{F}(S) \geq (d+1) - 2~\mathrm{min}~\mathrm{PF}(S)$. Hence, the result follows. 
\end{proof}

\end{document}